\newtheorem{theorem}{Theorem}
\newtheorem{lemma}[theorem]{Lemma}
\newtheorem{corollary}{Corollary}[theorem]
\newtheorem{assumption}{Assumption}
\newtheorem{remark}{Remark}
\theoremstyle{definition}
\newtheorem{definition}{Definition}
\newcommand*{\tran}{\mathsf{T}}
\DeclareMathOperator*{\argmin}{arg\,min}
\DeclareMathOperator*{\minimize}{minimize}
\DeclareMathOperator*{\subjectto}{subject~to}
\DeclareMathOperator{\proj}{\mathrm{proj}}
\newcommand\copyrighttext{%
  \footnotesize This is the Accepted Manuscript of an article to appear in the IEEE Transactions on Cybernetics, available at: \url{https://doi.org/10.1109/TCYB.2026.3711940}
}
\newcommand\copyrightnotice{%
    \begin{tikzpicture}[remember picture,overlay]
    \node[anchor=south,yshift=10pt] at (current page.south) {\fbox{\parbox{\dimexpr\textwidth-\fboxsep-\fboxrule\relax}{\copyrighttext}}};
    \end{tikzpicture}%
}
\title{A system-level approach to generalized feedback Nash equilibrium seeking in partially observed games}
\author{
    Otacilio B. L. Neto, Michela Mulas, and Francesco Corona
    \thanks{Otacilio B. L. Neto and Francesco Corona are with the Department of Chemical and Metallurgical Engineering, Aalto University, Finland (e-mails: otacilio.neto@aalto.fi, franciscu.corona@gmail.com). }
    \thanks{Michela Mulas is with the Department of Teleinformatics Engineering, Federal University of Cear\'a, Brazil (e-mail: michela.mulas@ufc.br).}
}
\begin{document}

\maketitle%
\copyrightnotice%
\begin{abstract} 
    This work proposes an algorithm for seeking generalized feedback Nash equilibria (GFNE) in noncooperative dynamic games.
    The focus is on cyber-physical systems with dynamics which are linear, stochastic, potentially unstable, and partially observed.
    We employ System Level Synthesis (SLS) to reformulate the problem as the search for an equilibrium profile of closed-loop responses to noise, which can then be used to reconstruct a stabilizing output-feedback policy.
    Under this setup, we leverage monotone operator theory to design a GFNE-seeking algorithm capable to enforce closed-loop stability, operational constraints, and communication constraints onto the control policies. 
    This algorithm is amenable to numerical implementation and we provide conditions for its convergence.
    We demonstrate our approach in a simulated experiment on the noncooperative stabilization of a decentralized power grid.
\end{abstract}

\begin{IEEEkeywords}
    Noncooperative games, Feedback Nash equilibrium, Monotone operators, System level synthesis
\end{IEEEkeywords}

\section{Introduction} \label{sec: Introduction}

\IEEEPARstart{C}{yber-physical} systems are often comprised of many interacting subsystems each operated by a self-interested decision-making agent. 
Each agent should control its subsystem according to a feedback policy which is optimal, given its local objective, while still satisfying global requirements, given the policies applied to the other subsystems. 
However, due to the noncooperative and multi-objective nature of the problem, a traditional approach to policy synthesis becomes impractical.
Dynamic game theory offers an alternative framework that explicitly accounts for the behavior of these rational agents when choosing their policies \cite{Basar1998}.
Under this framework, the task translates to the search for a profile of feedback policies (the \textit{strategies}) which are feasible and agreeable to all agents (the \textit{players}); that is, a strategic equilibrium.
Game-theoretical methods for online decision-making have found success in many applications, such as power systems \cite{Chen2010, MohsenianRad2010, Saad2012}, communication networks \cite{Pavel2012}, and multi-agent robotics \cite{Fisac2019, Spica2020, Wang2021}. 

Due to their generality, a pertinent class of problems concern noncooperative dynamic games in which the underlying system is stochastic, potentially unstable, and partially observed. 
In such problems, players choose output-feedback policies to jointly stabilize the whole system while using (noisy) partial measurements of its internal state.
Their choice can be further restricted by coupled constraints on their control actions, on the state responses, and on the policies themselves (e.g., encoding communication delays).
A relevant solution concept is the \textit{generalized feedback Nash equilibrium} (GFNE, \cite{Basar1998,Facchinei2010}):
A set of policies which are, simultaneously, the best strategy for each player given the others' choices.
Despite prevalent, Nash equilibria are notoriously hard to compute \cite{Daskalakis2009} and there are still no systematic solutions to the aforementioned class of dynamic games.
The current practice for solving dynamic games includes dynamic programming \cite{Kamalapurkar2014, Tanwani2019, Laine2023, Monti2024, Xue2025}, complementarity methods \cite{Reddy2017,Reddy2019}, and \textit{ad-hoc} heuristics \cite{Fridovich-Keil2020a, Fridovich-Keil2020b, Hall2025}. 
These methods are mostly limited to state-feedback problems and cannot address either closed-loop stability, operational constraints, or the design of the policy structure.
Moreover, they focus on solving the game for the sake of analyzing how the agents behave when operating under equilibrium policies; often not discussing how these agents would themselves learn such policies.
\textit{Equilibrium-seeking algorithms}, when players actively search for an equilibrium solution, mimic the behavior of real rational agents and thus provide practical routines for decentralized policy learning.

Recently, monotone operator theory has gained attention as a unifying framework for designing equilibrium-seeking algorithms for many applications \cite{Yi2019, Kaihong2019, Belgioioso2022a, Kaihong2022, Belgioioso2025}.
Under a refined solution concept, the \textit{variational generalized Nash equilibrium} (vGNE), noncooperative games can be reformulated as monotone inclusion problems and then approached by fixed-point methods which often translate to equilibrium-seeking routines.
This operator-theoretical approach not only allows the solution of noncooperative games with non-trivial information structures, a major challenge in the field \cite{Basar2014}, but also simplifies the convergence analysis of such routines. 
Regrettably, its application is mostly limited to static games due to technical challenges arising in GFNE-seeking problems:
The space of output-feedback policies (i.e., mappings from measurement to action signals) is generally not endowed with an inner product, necessary for defining \textit{variational} equilibria.
In addition, the numerics common to most vGNE-seeking routines (e.g., computing pseudo-gradient and projections) are cumbersome when the strategy spaces are infinite-dimensional; thus limiting their scalability.
However, if such issues can be overcome, monotone operator theory stands out as a promising platform for designing GFNE-seeking algorithms for dynamic games.
This is the purpose of this study.

In this work, we propose a GFNE-seeking algorithm for partially observed stochastic dynamic games.
Our algorithm is centred on the equivalent representation of linear output-feedback policies as their corresponding \textit{closed-loop responses} (or \textit{system level responses}) to disturbances.
Under this setup, players design policies indirectly by instead choosing desired system level responses that serve as their parametrization.
Using the System Level Synthesis (SLS, \cite{Anderson2019}) methodology, we translate the original dynamic game into a static game whose strategy space consists of closed-loop responses that explicitly enforce stability (for the whole networked system), operational constraints (on the state and control signals), and communication constraints (for the output-feedback policies).
This strategy space is convex and finite-dimensional, making the associated static game amenable to numerical solutions.
We formulate a monotone inclusion problem to obtain a vGNE for this static game, which is then used to directly reconstruct a GFNE solution to the original problem; effectively enabling the design of a GFNE-seeking algorithm. 
In this direction, we design a routine in which players improve their policies, in parallel, by updating their system level parametrization, while a coordinator ensures that constraints are satisfied. 
Using results from operator theory, we derive convergence certificates for this algorithm.
In summary, our contributions are:
\begin{enumerate}[(i)]
    \item 
    A principled approach to solve noncooperative dynamic games using monotone operator theory. 
    We propose an equilibrium-seeking algorithm based on the equivalent representation of GFNE as their corresponding closed-loop responses to external disturbances;
    \item 
    A configuration of our method to address stability, operational, and informational constraints in partially observed systems. 
    We focus on the (not restrictive) case of policies subjected to actuation and communication delays;
    \item 
    A formal analysis of the convergence properties of our algorithm and a specific implementation of its numerics to improve its efficiency and scalability.
\end{enumerate}
We demonstrate our GFNE-seeking algorithm in a simulated problem: The noncooperative control of a decentralized power grid. 
The routine is executed alongside the operation of the system and we show that players approach a GFNE while still complying with operational and communication constraints.

This article builds on our previous work \cite{NMC_SLS_BRD2024} which addressed GFNE-seeking in state-feedback problems (i.e., fully-observed games) using the \textit{best-response dynamics} algorithm. 
Our results here provide a significant extension not only by considering the more general class of output-feedback problems, but also by applying monotone operator theory to design a scalable \textit{operator splitting} algorithm with well-understood convergence properties. 
Moreover, while our previous study focused on systems affected by bounded disturbances, here we consider the case of unbounded (e.g., Gaussian) noise. 

The paper is structured as follows: 
Section \ref{sec: Preliminaries} reviews noncooperative games and operator splitting algorithms. 
Section \ref{sec: GFNE_Seeking} presents the SLS-based reformulation of GFNE problems and our proposed equilibrium-seeking algorithm for their solution.
Section \ref{sec: Examples} demonstrates our proposal in an exemplary problem and, finally, Section \ref{sec: Concluding_Remarks} concludes the article. 

\subsection{Notation} \label{sec: Introduction_Notation}

We use Latin and Greek letters to denote vectors (lowercase) and mappings (uppercase). 
Sets are in calligraphic font; exceptions are the usual $\mathbb{R}$ and $\mathbb{N}$. 
Given $\mathcal{I} \subseteq \mathbb{N}$, sequences are denoted $x = (x_t)_{t\in\mathcal{I}}$ or $x = (x_t)_{t=0}^{T}$ if $\mathcal{I} = \{0,\ldots,T\}$. 
For $p \in (0,\infty)$, we define the set of $N_x$-dimensional sequences $\ell^{N_x}_p(\mathcal{I}) = \{ x \mid \| x \|_{\ell_p} = (\sum_{t\in\mathcal{I}} \| x_t \|^p)^{\frac{1}{p}} < \infty \}$, with $\ell^{N_x}_{\infty}$ the space of all bounded sequences. 
$\mathcal{L}(\mathcal{X},\mathcal{Y})$ is the set of bounded linear operators $A : \mathcal{X} \to \mathcal{Y}$. 
We use the standard definitions of Hardy spaces $\mathcal{H}_{\infty}$ and $\mathcal{RH}_{\infty}$, with $\frac{1}{z}\mathcal{RH}_{\infty}$ denoting the set of real-rational strictly proper transfer functions. 
Some standard signals and operators used in this paper include: 
The impulse signal $\delta = (\delta_{t})_{t\in\mathbb{N}}$, the identity operator $I$ and matrix $I_{N}$ (with $e_i$ denoting its $i$-th column), and the normal cone $N_{\mathcal{S}}$ and projection $\proj_{\mathcal{S}}$ operators for a closed convex set $\mathcal{S} \subseteq \mathbb{R}^{N}$.

We distinguish set-valued mappings from ordinary functions using the notation $F : \mathcal{X} \rightrightarrows \mathcal{Y}$. 
For any tuple $s = (s^p)_{p\in\mathcal{P}} \in \mathcal{S}$ we often write $s = (s^p, s^{-p})$ to highlight the $p$-th element; this should not be interpreted as a reordering. 
Similarly, for any set $\mathcal{S} = \prod_{p\in\mathcal{P}}\mathcal{S}^{p}$, we define the product $\mathcal{S}^{-p} = \prod_{\tilde{p}\in\mathcal{P}\backslash\{p\}}\mathcal{S}^{\tilde{p}}$.
A mapping $F : \mathbb{R}^{N} \rightrightarrows \mathbb{R}^M$ is $M_F$-strongly-monotone and $L_F$-Lipschitz continuous with $0 < M_F \leq L_F < \infty$ if 
\begin{equation*}
    \langle u - v, x - y \rangle \geq M_F \| x - y \|_2^2  \text{ and } \|u - v\|_2 \leq L_F \| x - y \|_2,
\end{equation*}
for any $x,y \in \mathbb{R}^N$, $u \in F(x)$ and $v \in F(y)$.
$F$ is a contraction if $L_F < 1$, nonexpansive if $L_F = 1$, monotone if the first inequality only holds for $M_F = 0$, and maximally monotone if no other monotone operator properly contains it.
Finally, $\mathbf{fix}(F) = \{ x \mid x \in F(x) \}$ and $\mathbf{zer}(F) = \{ x \mid 0 \in F(x) \}$ are, respectively, the set of fixed-points and zeroes of $F$. 

\section{Preliminaries: Noncooperative games and equilibrium seeking via operator splitting} \label{sec: Preliminaries}

A (static) $N_P$-player game, denoted by a tuple 
\begin{equation} \label{eq: Static_Game}
    \mathcal{G} \coloneqq (\mathcal{P},\{ S^p \}_{p\in\mathcal{P}},\{ L^p \}_{p\in\mathcal{P}}),
\end{equation}
defines the problem in which \textit{players} $p \in \mathcal{P} = \{1, \ldots, N_P\}$ each decides on a strategy $s^p \in S^p(s^{-p}) \subseteq \mathcal{S}^p$ to minimize an objective function $L^p : \mathcal{S}^1 \times \cdots \times \mathcal{S}^{N_P} \to \mathbb{R}$. 
The strategy sets $\mathcal{S}^p \subseteq \mathbb{R}^{N_s^p}$ ($\forall p\in\mathcal{P}$) define the actions available to the players, with $S^{p} : \mathcal{S}^{-p} \rightrightarrows \mathcal{S}^p$ restricting this choice based on their rivals' strategies. 
As such, the problem is coupled through both objective functions and feasible action sets. 
Finally, the players are assumed to be non-cooperative and acting simultaneously. 

A strategy profile $s = (s^1, \ldots, s^{N_P}) \in \mathcal{S} = \mathcal{S}^1 \times \cdots \times \mathcal{S}^{N_P}$, characterizes a solution to $\mathcal{G}$ when it is agreeable to all players. 
If acting rationally, a reasonable assumption is that players might prefer to (myopically) choose a best-response to their rivals's strategies. 
Formally, the mapping $BR^p : \mathcal{S}^{-p} \rightrightarrows \mathcal{S}^{p}$,
\begin{equation} \label{eq: Best_Response}
    BR^p(s^{-p}) \coloneqq \textstyle\argmin_{s^p} \{ L^p(s^p,s^{-p}) \mid s^p \in S^p(s^{-p}) \}
\end{equation}
defines all such strategies for each player $p \in \mathcal{P}$. 
A profile is a best-response for all players, simultaneously, when it is a fixed-point of the joint-best-response mapping $BR : \mathcal{S} \rightrightarrows \mathcal{S}$ defined as $BR(s) \coloneqq BR^1(s^{-1}) \times \cdots \times BR^{N_P}(s^{-N_P})$. 
This motivates a formal solution concept for non-cooperative games known as the \textit{generalized Nash equilibrium} (GNE, \cite{Facchinei2010}). 
\begin{definition} \label{def: Nash_Equilbrium}
    A profile $s^{\star} = (s^{1^{\star}},\ldots,s^{N_P^{\star}}) \in \mathcal{S}$ satisfying $s^{\star} \in BR(s^{\star})$ is a \textit{generalized Nash equilibrium} (GNE) of $\mathcal{G}$.
\end{definition}
Under this definition, the game $\mathcal{G}$ is solved when no player can improve its outcome by unilaterally deviating from its current strategy. 
The set of fixed-points $\mathbf{fix}(BR)$ comprise all the solutions to $\mathcal{G}$. 
As such, the existence and uniqueness of a GNE depend explicitly on the properties of the objectives $\{ L^p \}_{p\in\mathcal{P}}$ and constraints $\{ S^p \}_{p\in\mathcal{P}}$ functions. 
Hereafter, we ensure that the problems being discussed are well-posed by making the following assumptions on their primitives:
\begin{assumption} \label{as: Nash_Equilbrium_Existence}
    For each player $p \in \mathcal{P}$,
    \begin{enumerate}[(a)]
        \item 
        the objective $L^p$ is continuously differentiable on $\mathcal{S}$, and $L^p( \cdot, s^{-p} )$ is strictly convex on $\mathcal{S}^p$ for all $s^{-p} \in \mathcal{S}^{-p}$.
        \item
        the strategy set $\mathcal{S}^{p}$ is nonempty, compact, and convex. 
        The mapping $S^p : \mathcal{S}^{-p} \rightrightarrows \mathcal{S}^{p}$ takes the form
        \begin{equation} \label{eq: Definition_Sp}
            S^p(s^{-p}) \coloneqq \{ s^p \in \mathcal{S}^p \mid (s^p, s^{-p}) \in \mathcal{S}^{\text{global}} \},
        \end{equation}
        given a compact convex constraint set $\mathcal{S}^{\text{global}} \subseteq \mathbb{R}^{N_s}$.
        Moreover, they satisfy $\mathcal{S}_{\mathcal{G}} = (\mathcal{S}^1 \times \cdots \times \mathcal{S}^{N_P}) \cap \mathcal{S}^{\text{global}} \neq \emptyset$.
    \end{enumerate}
\end{assumption}
Under Assumption \ref{as: Nash_Equilbrium_Existence}, $BR : \mathcal{S}_{\mathcal{G}} \to \mathcal{S}_{\mathcal{G}}$ is a continuous function from a nonempty compact convex set onto itself: 
The existence of GNE (i.e., $\mathbf{fix}(BR) \ne \emptyset$) is ensured by the Brouwer's fixed-point theorem \cite{Conway2007}. 
In practice, these conditions imply that players' best-responses are always unique and their actions are coupled only through a common constraint. 
Specifically, Eq.~\eqref{eq: Definition_Sp} can be interpreted as the competition for a limited shared resource (e.g., bandwidth in a network or goods in a supply-chain). 
Although restrictive, these assumptions still cover a broad class of problems of practical relevance. 

We investigate GNE-seeking algorithms for solving $\mathcal{G}$. 
Namely, we are interested in fixed-point methods for players to learn a strategy profile $s^{\star} \in \mathbf{fix}(BR)$. 
Assuming that $\mathcal{G}$ can be repeated, with $s_k = (s_k^1, \ldots, s_k^{N_P})$ being the strategies played at the $k$-th episode, we seek an \textit{update rule} $T : \mathcal{S}_{\mathcal{G}} \to \mathcal{S}_{\mathcal{G}}$ such that $s_{k+1} = T(s_k)$ converges to an equilibrium $s^{\star} \in \mathbf{fix}(BR)$. 
Conforming to realistic scenarios, such a mapping must be
\begin{itemize}
    \item \textit{semi-decentralized}, in the sense that players compute their updates independently, with minimal coordination;
    \item \textit{based on private information}, in the sense that players do not query their rivals' objectives and strategy sets.
\end{itemize}
A natural choice consists on the operator $T = (1{-}\eta)I + \eta BR$ for some $\eta \in (0,1)$, since $s_{k+1} = (1{-}\eta)s_k + \eta BR(s_k)$ converges to a fixed-point $s^{\star} \in \mathbf{fix}(BR)$ (a GNE, by definition) when $BR$ is either a nonexpansive or contractive operator \cite{Bauschke2017}. 
This method, \textit{Best-Response Dynamics} (BRD), satisfies the above requirements. 
However, the Lipschitz properties of $BR$ are difficult to establish for most generalized Nash equilibrium problems and convergence might fail even in simple cases \cite{NMC_SLS_BRD2024}. 

A class of equilibrium-seeking algorithms which are computationally tractable and have straightforward convergence properties can be obtained by adopting a refined solution concept: 
The \textit{variational generalized Nash equilibrium} (vGNE, \cite{Facchinei2010}). 
\begin{definition} \label{def: Variational_Nash_Equilbrium}
    A profile $s^{\star} = (s^{1^{\star}},\ldots,s^{N_P^{\star}}) \in \mathcal{S}$ satisfying 
    \begin{equation} \label{eq: Variational_Nash_Equilibrium}
        \langle F(s^{\star}), s - s^{\star} \rangle \geq 0, \quad \forall s \in \mathcal{S}_{\mathcal{G}},
    \end{equation}
    given the {pseudo-gradient} $F(s) = (\nabla_{s^p} L^p(s^p, s^{-p}))_{p\in\mathcal{P}}$, is a \textit{variational generalized Nash equilibrium} (vGNE) of game $\mathcal{G}$. 
    Moreover, we let $\mathbf{VI}(BR)$ denote the set of all vGNE of $\mathcal{G}$.
\end{definition}
The term \textit{vGNE} is justified since $\mathbf{VI}(BR) \subseteq \mathbf{fix}(BR)$ \cite{Facchinei2010}. 
Despite the implication that some GNE can potentially be lost, the variational approach always leads to equilibrium solutions which are fair, in the sense that the ``shared resource'' described by $\mathcal{S}^{\text{global}}$ is uniformly distributed among players; 
it is indeed a refinement \cite{Kulkarni2012}. 
The following standing assumption is taken: 
\begin{assumption} \label{as: F_MaximalMonotonicity}
    The pseudo-gradient $F$ is maximal monotone.
\end{assumption}
Under this assumption, obtaining a vGNE $s^{\star} \in \mathbf{VI}(BR)$ is equivalent to the monotone inclusion problem 
\begin{equation} \label{eq: vGNE_Monotone_Inclusion_Problem}
    \text{Find } s^{\star} \in \mathbb{R}^{N_s} \text{ such that } 0 \in F(s^{\star}) + N_{\mathcal{S}_{\mathcal{G}}}(s^{\star}),
\end{equation}
which can be solved via operator splitting methods \cite{Bauschke2017,Ryu2022}. 
In this work, we tackle Problem \eqref{eq: vGNE_Monotone_Inclusion_Problem} using \textit{forward-backward splitting} (FBS), a well-established method for solving monotone inclusion problems. 
This method and its application to vGNE-seeking are overviewed in the following.

\subsection{vGNE-seeking via FBS} \label{sec: Operator_Splitting}

Because $F$ is maximal monotone and single-valued, some algebra allows us to obtain the equivalence
\begin{equation*}
    0 \in F(s^{\star}) + N_{\mathcal{S}_{\mathcal{G}}}(s^{\star})  \iff s^{\star} = (I + \eta N_{\mathcal{S}_{\mathcal{G}}})^{-1}(I - \eta F)(s^{\star}),
\end{equation*}
for any constant $\eta > 0$. In turn, this implies
\begin{equation}
    \mathbf{VI}(BR) = \mathbf{zer}(F + N_{\mathcal{S}_{\mathcal{G}}}) = \mathbf{fix}(T)
\end{equation}
for the operator $T = (I + \eta N_{\mathcal{S}_{\mathcal{G}}})^{-1} \circ (I - \eta F)$. 
This operator is a candidate update rule since the iterations $s_{k+1} = T(s_k)$ converge to a (unique) solution $s^{\star} \in \mathbf{VI}(BR)$ whenever $T$ is a contraction. 
Due to its composition form, this iteration can be split into a forward and backward step, respectively 
\begin{subequations}
\begin{align}
    \underbrace{\begin{bmatrix} s^{1}_{k+1/2} \\ \vdots \\ s^{N_P}_{k+1/2} \end{bmatrix}}_{s_{k+1/2}}
        &= \underbrace{\begin{bmatrix} s^1_k - \eta \nabla_{s^1} L^1(s^1_k, s^{-1}_k) \\ \vdots \\ s_k^{N_P} - \eta \nabla_{s^{N_P}} L^p(s^{N_P}_k, s^{-N_P}_k) \end{bmatrix}}_{(I - \eta F)(s_k)};   \label{eq: FB_Equations_a} \\ 
    s_{k+1} &= \proj_{\mathcal{S}_{\mathcal{G}}}(s_{k+1/2}),  \label{eq: FB_Equations_b}
\end{align} \label{eq: FB_Equations}%
\end{subequations}
since $(I + \eta N_{\mathcal{C}})^{-1} = \proj_{\mathcal{C}}$ for any closed convex set $\mathcal{C}$ \cite{Bauschke2017}. 
Finally, we obtain the semi-decentralized routine for vGNE-Seeking via FBS outlined in Algorithm \ref{alg: FB_vGNE-Seeking}. 
At each $k > 0$, this routine consists of two main instructions:
\begin{enumerate}[(1)]
    \item Simultaneously, players propose new strategies to improve their objectives while disregarding the constraints;
    \item A coordinator collects these proposals and then computes (and broadcasts) the closest permissible strategies.
\end{enumerate}

\begin{algorithm}[htb!] \label{alg: FB_vGNE-Seeking}
    \caption{vGNE-Seeking via FBS}
    Initialize $s_{0} = (s_{0}^1,\ldots,s_{0}^{N_P})$\;
    \For{$k = 0, 1, 2, \ldots$}{ 
        \For{each player $p \in \mathcal{P}$}{
            $s^p_{k+1/2} \coloneqq s^p_{k} - \eta \nabla_{s^p} L^p(s^p_k, s^{-p}_k)$\;
        }
        \For{coordinator}{
            $s_{k+1} \coloneqq \proj_{\mathcal{S}_{\mathcal{G}}}(s_{k+1/2})$\;\vspace*{-0.2em}
        }\vspace*{-0.2em}
    }
\end{algorithm}

In practice, this routine is interpreted as players moving towards best-response strategies while a coordinator ensures that no constraint is violated. 
In addition to this convenient interpretation, Algorithm \ref{alg: FB_vGNE-Seeking} benefits a realistic vGNE-seeking by requiring only the profiles $(s_k)_{k\in\mathbb{N}}$ to be public: 
The players do not access the objectives $\{ L^p \}_{p\in\mathcal{P}}$ and constraint $\{ S^p \}_{p\in\mathcal{P}}$ functions, except for their own. 
This information needs only to be available to the coordinator. 
We remark that the routine can still be specialized to have the players estimate their rivals' strategies by exchanging information \cite{Yi2019, Belgioioso2022a, Kaihong2019, Kaihong2022}.

As previously mentioned, $T(s_k) \to s^{\star} \in \mathbf{VI}(BR)$ when $T$ is a contraction. 
Because the operator $\proj_{\mathcal{S}_{\mathcal{G}}} = (I + \eta N_{\mathcal{S}_{\mathcal{G}}})^{-1}$ is nonexpansive (as $\mathcal{S}_{\mathcal{G}}$ is nonempty, closed and convex), this translates into requiring that the forward operator $(I - \eta F)$ be contractive. 
The following result can thus be established:
\begin{lemma} \label{thm: fFB_Convergence}
    Let the pseudo-gradient $F$ be $L_F$-Lipschitz and $M_F$-strongly-monotone.
    If $\eta \in (0, 2M_F/L_F^2)$, then Algorithm \ref{alg: FB_vGNE-Seeking} converges linearly to the unique $s^{\star} \in \mathbf{VI}(BR)$ with rate
    \begin{equation} 
        \lim_{k \to \infty} \frac{\| s_{k+1} - s^{\star} \|_2}{\| s_k - s^{\star} \|_2} = \sqrt{1 - \eta (2M_F - \eta L_F^2)}
    \end{equation}
    from any initial strategy profile $s_0 \in \mathbb{R}^{N_s}$. 
\end{lemma}
\begin{proof}
    Since $\mathbf{VI}(BR) = \mathbf{zer}(F + N_{\mathcal{S}_{\mathcal{G}}})$, the proof is as in \cite[Proposition 26.16]{Bauschke2017} with $A = N_{\mathcal{S}_{\mathcal{G}}}$ and $B = F$.
\end{proof}
A corollary to this result is that Problem \eqref{eq: vGNE_Monotone_Inclusion_Problem} has a unique solution when $\kappa_F = L_F/M_F \in (0,\infty)$. 
This constant is the \emph{condition number} of $F$, as it describes the sensitivity of this operator to changes in its arguments. 
The linear convergence of Algorithm \ref{alg: FB_vGNE-Seeking}, implying that each strategy update decreases the distance to the vGNE solution by (at least) a constant factor, can still be slow if $\kappa_F \gg 0$.
In such cases, the game (or, more directly, the pseudo-gradient $F$) is said to be ill-conditioned. 
We also note that choosing a learning rate $\eta \in (0, 2M_F/L_F^2)$ requires full knowledge of the pseudo-gradient.
In this work, it is assumed that the coordinator is responsible for computing and broadcasting this parameter to the players. 

\subsection{Noncooperative dynamic games} \label{sec: Dynamic_Games}

We are interested in $N_P$-player dynamic stochastic games, 
\begin{equation} \label{eq: Dynamic_Game}
    \mathcal{G}_{\infty}^{\text{\text{LQ}}} \coloneqq (\mathcal{P}, \{ U^p \}_{p\in\mathcal{P}}, \{ J^p \}_{p\in\mathcal{P}}),
\end{equation}
defined by linear dynamics and measurement model 
\begin{subequations}
    \begin{empheq}[left=\Sigma_{\mathcal{G}}:\empheqlbrace]{align} 
        x_{t+1} &= A x_{t} + B_w w_t + \textstyle\sum_{p\in\mathcal{P}} B_u^p u^p_{t}; \label{eq: DynamicGame_SS_a}\\
          y_{t} &= C x_{t} + D_w w_t.         \label{eq: DynamicGame_SS_b}
    \end{empheq} \label{eq: DynamicGame_SS}%
\end{subequations}
with initial state $x_0 = 0$. 
The dynamics (Eq.~\ref{eq: DynamicGame_SS_a}) describe how the state of the game, $x = (x_t)_{t\in\mathbb{N}}$, evolves in response to the disturbances $w = (w_t)_{t\in\mathbb{N}}$ and player's actions $u^p = (u_t^p)_{t\in\mathbb{N}}$, $p \in \mathcal{P}$. 
The measurement process (Eq.~\ref{eq: DynamicGame_SS_b}) describes how a public observation signal, $y = (y_t)_{t \in \mathbb{N}}$, is formed by (noisy) partial emissions of the state. 
We consider bounded signals, that is, $x \in \ell_{\infty}^{N_x}(\mathbb{N})$, $u^p \in \ell_{\infty}^{N_u^p}(\mathbb{N})$, $w \in \ell_{\infty}^{N_w}(\mathbb{N})$, and $y \in \ell_{\infty}^{N_y}(\mathbb{N})$. 
Finally, we assume $w$ to be a white noise process satisfying $\mathrm{E}w_t = 0$ and $\mathrm{E}(w_{t+\tau} w_t^{\tran}) = \delta_{\tau} I_{N_w}$ for all $t,\tau \in \mathbb{N}$.

In this class of dynamic games, each player decides a plan of action $u^p \in U^p(u^{-p})$ to minimize an objective functional
\begin{equation} \label{eq: DynamicGame_Cost}
    J^p(u^p, u^{-p}) \coloneqq 
        \mathrm{E}\left[\sum_{t=0}^{\infty} \left\| \begin{bmatrix} W^p_x & W^p_u \end{bmatrix}\hspace*{-0.25em}\begin{bmatrix} x_t \\ u^p_t \end{bmatrix} \right\|_2^2 \right],
\end{equation}
given weighting matrices $W^p_x \in \mathbb{R}^{N_z \times N_x}$ and $W^p_u \in \mathbb{R}^{N_z \times N_u}$ with dimension $N_z \geq N_x + N_u$. 
This objective corresponds to a standard \emph{linear-quadratic regulator} control objective and can be interpreted as the expected energy $\mathrm{E}\| z \|_{\ell_2}^2$ of a performance signal $z = (W_x^p x_t + W_u^p u_t^p)_{t\in\mathbb{N}}$~\cite{Zhou1998}. 
Note that $J^p(\cdot)$ depends on other players' actions $u^{-p}$ via the state signal $x$ (Eq.~\ref{eq: DynamicGame_SS_a}).
The mapping $U^p : \mathcal{U}^{-p} \rightrightarrows \mathcal{U}^p$ restricts the $p$-th player's strategies based on its rivals' choices, with $\mathcal{U}^p$ being the set of all permissible strategies. 
As in the static case, players might prefer to act according to best-responses,
\begin{equation*} \label{eq: Best_Response_GNE_Dynamic_Map}
    BR^p(u^{-p}) \coloneqq \textstyle\argmin_{u^p} \big\{ J^p(u^p, u^{-p}) \mid u^p \in U^p(u^{-p}) \big\},
\end{equation*}
so that a (open-loop) GNE solution to $\mathcal{G}_{\infty}^{\text{\text{LQ}}}$ is understood as an action profile $u^{\star} = (u^{1^{\star}}, \ldots, u^{N_P^{\star}}) \in \mathcal{U} = \mathcal{U}^1 \times \cdots \times \mathcal{U}^{N_P}$, for which no player unilaterally benefits by deviating. 
Formally, this corresponds to a fixed-point $u^{\star} \in \mathbf{fix}(BR)$. 
The following assumptions are thus taken for this class of games:
\begin{assumption} \label{as: FeedbackNash_Equilbrium_Existence}
    For each player $p \in \mathcal{P}$,
    \begin{enumerate}[(a)]
        \item 
        the weighting matrix $W_u^p$ is full-column-rank and it satisfies $(W_u^p)^{\tran}(W_x^p) = 0$ and $(W_x^p)^{\tran}(W_u^p) = 0$;
        \item 
        the noise-filtering matrices $B_w$ and $D_w$ are full-row-rank and they satisfy $(B_w)(D_w)^{\tran} = 0$ and $(D_w)(B_w)^{\tran} = 0$.
        \item 
        the mapping $U^p : \mathcal{U}^{-p} \rightrightarrows \mathcal{U}^p$ takes the form
        \begin{equation*}
            U^p(u^{-p}) \coloneqq \{ u^p \in \mathcal{U}^{p} \mid (u^p, u^{-p}) \in \mathcal{U}^{\text{global}}  \},
        \end{equation*}
        given the local $\mathcal{U}^p$ and global $\mathcal{U}^{\text{global}}$ constraint sets
        \begin{align*}
            \mathcal{U}^{p}             &= \{ u^p \in \ell_{\infty}^{N_u^p}(\mathbb{N}) \mid G_{u^p} u^p_t     \preceq 1,~ t \in \mathbb{N} \}; \\
            \mathcal{U}^{\text{global}} &= \{ u   \in \ell_{\infty}^{N_u  }(\mathbb{N}) \mid G_x x_t + G_u u_t \preceq 1,~ t \in \mathbb{N} \},
        \end{align*}
        where we implicitly use the fact that $x = F_u u + F_w w$ for causal linear operators $(F_u, F_w)$ induced by Eq.~\eqref{eq: DynamicGame_SS_a}. 
        Moreover,  $\mathcal{U}_{\mathcal{G}} = (\mathcal{U}^1 \times \cdots \times \mathcal{U}^{N_P}) \cap \mathcal{U}^{\text{global}} \neq \emptyset$.
    \end{enumerate}
\end{assumption} 
These conditions are analogous to Assumption \ref{as: Nash_Equilbrium_Existence}: 
They are to ensure that $\mathbf{fix}(BR) \neq \emptyset$. 
In this case, however, the joint policy $u = (u^1, \ldots, u^{N_P})$ must also be internally stabilizing to avoid $x_t \to \infty$ (and thus, $J^p(u^p,u^{-p}) \nless \infty$ for some $p \in \mathcal{P}$). 
Hereafter, we characterize an equilibrium as admissible only if it stabilizes the game. 
The following assumption ensures that (not necessarily $\mathcal{U}_{\mathcal{G}}$-feasible) stabilizing action profiles exist.
\begin{assumption} \label{as: Game_Is_Stabilisable}
    The system $(A, B_u, C)$, given the input matrix $B_u = [B_u^1 ~\cdots~ B_u^{N_P}]$, is both controllable and observable. 
\end{assumption}

In this work, we are not interested in open-loop equilibria $u^{\star} \in \mathbf{fix}(BR)$. 
A plan of action with such an information structure is undesirable, as the game becomes sensible to noise disturbances and decision errors \cite{Basar1998}. 
Conversely, feedback policies $u^p = K^p(y)$, for some $K^p : \ell_{\infty}^{N_y} \to \ell_{\infty}^{N_u^p}$, can detect such errors and adapt the plan of action accordingly. 
Thus, we let each $p$-th player's actions be represented by the policies 
\begin{equation} \label{eq: Policy_CLPS}
    u^p \coloneqq K^p y, \quad K^p : y \mapsto \Phi_K^p * y,
\end{equation}%
given linear causal operators $K^{p} \in S^p(K^{-p}) \subseteq \mathcal{L}(\ell_{\infty}^{N_y},\ell_{\infty}^{N_u^p})$ with convolution kernels $\Phi_K^p = (\Phi^p_{K,n})_{n\in\mathbb{N}}$. 
The mapping $S^p$ restrict the strategies to policies that are stabilizing and satisfy some $\mathcal{G}_{\infty}^{\text{\text{LQ}}}$-related restrictions (e.g., actuation and communication delays). 
In this setup, the $p$-th player's best-responses are the solutions to the synthesis problem 
\begin{subequations}
    \begin{align}
         \minimize_{u^p \coloneq K^p y} \quad 
            & \mathrm{E}\left[\sum_{t=0}^{\infty} \left\| \begin{bmatrix} W^p_x & W^p_u \end{bmatrix}\hspace*{-0.25em}\begin{bmatrix} x_t \\ u^p_t \end{bmatrix} \right\|_2^2 \right]  \label{eq: Best_Response_GFNE_a}\\
        \subjectto_{t = 0,1,\ldots} \quad
            & \Sigma_{\mathcal{G}} \text{ (Eq.~\ref{eq: DynamicGame_SS}) with } u^{\tilde{p}} = K^{\tilde{p}}y ~ (\forall \tilde{p}\in\mathcal{P}),                                 \label{eq: Best_Response_GFNE_b}\\[-1.5ex]    
            & K^p \in S^p(K^{-p}),                                                                                                                                                  \label{eq: Best_Response_GFNE_c}\\   
            & \begin{bmatrix} G_x & G_u \\ & e_p^{\tran} \otimes G_{u^p} \end{bmatrix} \begin{bmatrix} x_t \\ u_t \end{bmatrix} \preceq 1.                                          \label{eq: Best_Response_GFNE_d}
    \end{align} \label{eq: Best_Response_GFNE}%
\end{subequations}%
We denote the solutions to Problem \eqref{eq: Best_Response_GFNE} as $BR_{K}^p(K^{-p})$, given the joint policy $K^{-p} : y \mapsto (K^{\bar{p}}y)_{\bar{p}\in\mathcal{P}\backslash\{p\}}$. 
As before, the map 
\begin{equation*}
    BR_K(K) \coloneqq BR_K^{1}(K^{-1}) \times \cdots \times BR_{K}^{N_P}(K^{-N_P})
\end{equation*}
denotes the set of jointly-best-response policies. 
This induces a version of the game $\mathcal{G}_{\infty}^{\text{\text{LQ}}}$ in which a solution is now understood as a \textit{policy profile} $K \coloneqq (K^1, \ldots, K^{N_P}) \in \mathcal{L}(\ell_{\infty}^{N_y},\ell_{\infty}^{N_u})$ which is agreeable to all players. 
The solution concept that naturally arises is the generalized \textit{feedback} Nash equilibrium (GFNE).

\begin{definition} \label{def: Feedback_Nash_Equilbrium}
    A profile $K^{\star} = (K^{1^{\star}},\ldots,K^{N_P^{\star}})$ satisfying $K^{\star} \in BR_K(K^{\star})$ is a \textit{generalized feedback Nash equilibrium} (GFNE) of the dynamic game $\mathcal{G}_{\infty}^{\text{\text{LQ}}}$.
\end{definition}

Although GFNEs and GNEs are similar concepts, designing GFNE-seeking routines is considerably more challenging: 
Since Problem \eqref{eq: Best_Response_GFNE} is infinite-dimensional and stochastic, it cannot be approached by the FBS routine from Section \ref{sec: Operator_Splitting}. 
Indeed, even defining a vGFNE is not straightforward, as the strategy space $\mathcal{L}(\ell_{\infty}^{N_y},\ell_{\infty}^{N_u})$ is a Banach space, and therefore it is not equipped with an inner product \cite{Conway2007}. 
In the following, we propose to overcome these issues by considering a specific, but equivalent, representation of the policies $K^p$, for all $p\in\mathcal{P}$. 
We show how this representation can be leveraged to recast the best-response mappings $BR_{K}^p$ ($\forall p\in\mathcal{P}$) as finite-dimensional optimization problems, thus enabling the design of vGFNE-seeking algorithms for solving dynamic games.

\section{vGFNE-seeking via System Level Synthesis} \label{sec: GFNE_Seeking}

We propose an equivalent representation of control policies, $\{ K^p \}_{p\in\mathcal{P}}$, as system-level responses to noise, $\{ \Phi^p \}_{p\in\mathcal{P}}$.
Using System Level Synthesis (SLS), optimal stabilizing policies can be synthesized through tractable convex optimization problems whose domains are Hilbert spaces \cite{Anderson2019}.
A class of \textit{variational generalized feedback Nash equilibrium} (vGFNE) problems can then be formalized using this equivalent representation. 

In this section, we introduce our SLS-based approach to vGFNE-Seeking in partially observed games. 
We first recast the original best-response mappings $\{ BR_K^p \}_{p\in\mathcal{P}}$ as equivalent \textit{system-level best-response mappings} $\{ BR_{\Phi}^p \}_{p\in\mathcal{P}}$ defined by finite-horizon deterministic convex programs (Section \ref{sec: GNFE_Seeking_BR}). 
The system-level best-response mappings are then used to define a general class of vGFNE-Seeking problems that can be solved through operator splitting, ultimately enabling the solution of the class of dynamic games $\mathcal{G}_{\infty}^{\text{LQ}}$ (Section \ref{sec: vGFNE_Seeking_SLS}). 

\subsection{System-level best-response mappings} \label{sec: GNFE_Seeking_BR}

We start by assuming a stabilizing profile $(K^1, \ldots, K^{N_P})$, ensured by Assumption \ref{as: Game_Is_Stabilisable}. 
Each policy is associated with a transfer matrix $\hat{K}^p \in \mathcal{RH}_{\infty}$, $\hat{K}^p = \sum_{n=0}^{\infty} \frac{1}{z^n} \Phi_{K,n}^p$, defining the feedback policy $\hat{u}^p = \hat{K}^p\hat{y}$ in the frequency domain. 
From the $\mathbb{Z}$-transform of the system $\Sigma_{\mathcal{G}}$ (Eq.~\ref{eq: DynamicGame_SS}), we have
\begin{subequations}
\begin{align}
    z \hat{x}   &= A \hat{x} + B_w \hat{w} + \textstyle\sum_{p\in\mathcal{P}} B_u^p \hat{u}^p;  \label{eq: DynamicGame_SS_Frequency_a}\\
      \hat{u}^p &= \hat{K}^p (C \hat{x} + D_w \hat{w}), \quad \forall p \in \mathcal{P}.      \label{eq: DynamicGame_SS_Frequency_b}
\end{align} \label{eq: DynamicGame_SS_Frequency}%
\end{subequations}
The signals $(\hat{x},\hat{u}^1,\ldots,\hat{u}^{N_P})$ can be posed in terms of $\hat{w}$ as
\begin{equation} 
    \begin{bmatrix}
        \hat{x} \\ \hat{u}^1 \\ \vdots \\ \hat{u}^{N_P}
    \end{bmatrix} 
        = \begin{bmatrix}
            \hat{\Phi}_{xx}       & \hat{\Phi}_{xy}       \\
            \hat{\Phi}_{ux}^{1}   & \hat{\Phi}_{uy}^{1}   \\
            \vdots                     & \vdots                     \\
            \hat{\Phi}_{ux}^{N_P} & \hat{\Phi}_{uy}^{N_P} 
        \end{bmatrix} 
        \begin{bmatrix}
             \hat{\delta}_x \\ \hat{\delta}_y
        \end{bmatrix}, \label{eq: SLS_Parametrization_StateSpace}
\end{equation}
where $\hat{\delta}_x = B_w \hat{w}$ and $\hat{\delta}_y = D_w \hat{w}$, and
\begin{align*}
    \hat{\Phi}_{xx}     &= \big( (zI {-} A) - \textstyle\sum_{p\in\mathcal{P}} B_u^p \hat{K}^p C \big)^{-1};\\
    \hat{\Phi}_{xy}     &= \textstyle\sum_{p\in\mathcal{P}}\hat{\Phi}_{xx} B_u^p \hat{K}^p;\\
    \hat{\Phi}_{ux}^{p} &= \hat{K}^p C \hat{\Phi}_{xx};\\
    \hat{\Phi}_{uy}^{p} &= \hat{K}^p + \textstyle\sum_{\tilde{p}\in\mathcal{P}} \hat{K}^p C \hat{\Phi}_{xx} B_u^{\tilde{p}} \hat{K}^{\tilde{p}}.
\end{align*}
The transfer matrices ($\hat{\Phi}_{xx},\hat{\Phi}_{xy},\hat{\Phi}_{ux}^p,\hat{\Phi}_{uy}^p$) are \textit{system level responses} or \textit{closed-loop maps}: 
They describe how the game's state and the players' actions respond to the noise. 
For ease of notation, we define $\hat{\Phi}_{ux} = (\hat{\Phi}_{ux}^p)_{p\in\mathcal{P}}$ and $\hat{\Phi}_{uy} = (\hat{\Phi}_{uy}^p)_{p\in\mathcal{P}}$. 
Under this representation, the condition that the policy profile $\hat{K} = (\hat{K}^1, \ldots, \hat{K}^{N_P})$ is stabilizing can be expressed directly in terms of the system level response $(\hat{\Phi}_{xx}, \hat{\Phi}_{xy}, \hat{\Phi}_{ux}, \hat{\Phi}_{uy})$. 
Moreover, each policy $\hat{u}^p = \hat{K}^p \hat{y}$ ($\forall p\in\mathcal{P}$) can be represented directly in terms of these closed-loop maps. 
Formally: 

\begin{theorem}[System level parametrization]  \label{thm: SystemLevelSynthesis}
    Consider the system $\Sigma_{\mathcal{G}}$ (Eq.~\ref{eq: DynamicGame_SS_Frequency}) under output-feedback policies $\hat{u}^p = \hat{K}^p\hat{y}$ for all players $p \in \mathcal{P}$. 
    The following statements are true:
    \begin{enumerate}[(a)]
        \item 
        The policy profile $\hat{K} = (\hat{K}^1,\ldots,\hat{K}^{N_P})$ is stabilizing if its system level responses ($\hat{\Phi}_{xx},\hat{\Phi}_{xy},\hat{\Phi}_{ux},\hat{\Phi}_{uy}$) satisfy 
        \begin{subequations}
        \begin{align}
            \begin{bmatrix} zI {-} A ~~ {-}B_u^1 \cdots {-}B_u^{N_P} \end{bmatrix} 
            \begin{bmatrix}
                \hat{\Phi}_{xx} ~~ \hat{\Phi}_{xy} \\
                \hat{\Phi}_{ux} ~~ \hat{\Phi}_{uy}
            \end{bmatrix} 
                &= \begin{bmatrix} I ~~ 0 \end{bmatrix}; \\
            \begin{bmatrix}
                \hat{\Phi}_{xx} ~~ \hat{\Phi}_{xy} \\
                \hat{\Phi}_{ux} ~~ \hat{\Phi}_{uy}
            \end{bmatrix} 
            \begin{bmatrix} zI - A \\ -C \end{bmatrix}
                &= \begin{bmatrix} I \\ 0 \end{bmatrix}; \\
                \hat{\Phi}_{xx},\hat{\Phi}_{xy},\hat{\Phi}_{ux} \in \textstyle\frac{1}{z}\mathcal{RH}_{\infty}, ~~ \hat{\Phi}_{uy} \in&~ \mathcal{RH}_{\infty}.
        \end{align}\label{eq: SLP_AffineSpace}%
        \end{subequations}
        \item 
        The responses $(\hat{\Phi}_{xx},\hat{\Phi}_{xy},\hat{\Phi}_{ux},\hat{\Phi}_{uy})$ satisfying Eq. \eqref{eq: SLP_AffineSpace} are achieved by policies $\hat{K}^p = \hat{\Phi}_{uy}^p - \hat{\Phi}_{ux}^p \hat{\Phi}_{xx}^{-1} \hat{\Phi}_{xy}$ ($p \in \mathcal{P}$) which can be implemented as in Figure \ref{fig: SLP_ControlDiagram}, that is, 
        \begin{subequations}
        \begin{align}
            z\hat{\xi} &= \tilde{\Phi}_{xx}   \hat{\xi} + \tilde{\Phi}_{xy}   \hat{y}; \label{eq: SLP_ControlImplementation_a} \\
             \hat{u}^p &= \tilde{\Phi}_{ux}^p \hat{\xi} +         \Phi_{uy}^p \hat{y}, \label{eq: SLP_ControlImplementation_b}
        \end{align}\label{eq: SLP_ControlImplementation}%
        \end{subequations}
        with ${\tilde \Phi_{xx}}{=}\,z(I {-} z\hat{\Phi}_{xx})$, ${\tilde \Phi_{ux}}^p = z\hat{\Phi}_{ux}^p$, and ${\tilde \Phi_{xy}} = {-}z\hat{\Phi}_{xy}$. 
        Moreover, $\hat{K} = (\hat{K}^1,\ldots,\hat{K}^{N_P})$ is internally stabilizing.
    \end{enumerate}
\end{theorem}
\begin{proof}
    The proof is identical to that of \cite[Theorem 5.1]{Anderson2019} by letting $B_2 = [B_u^1 ~ \cdots ~ B_u^{N_P}]$ and $C_2 = C$.
\end{proof}

\begin{figure}[tb!] \centering
    \includegraphics[width=0.975\columnwidth]{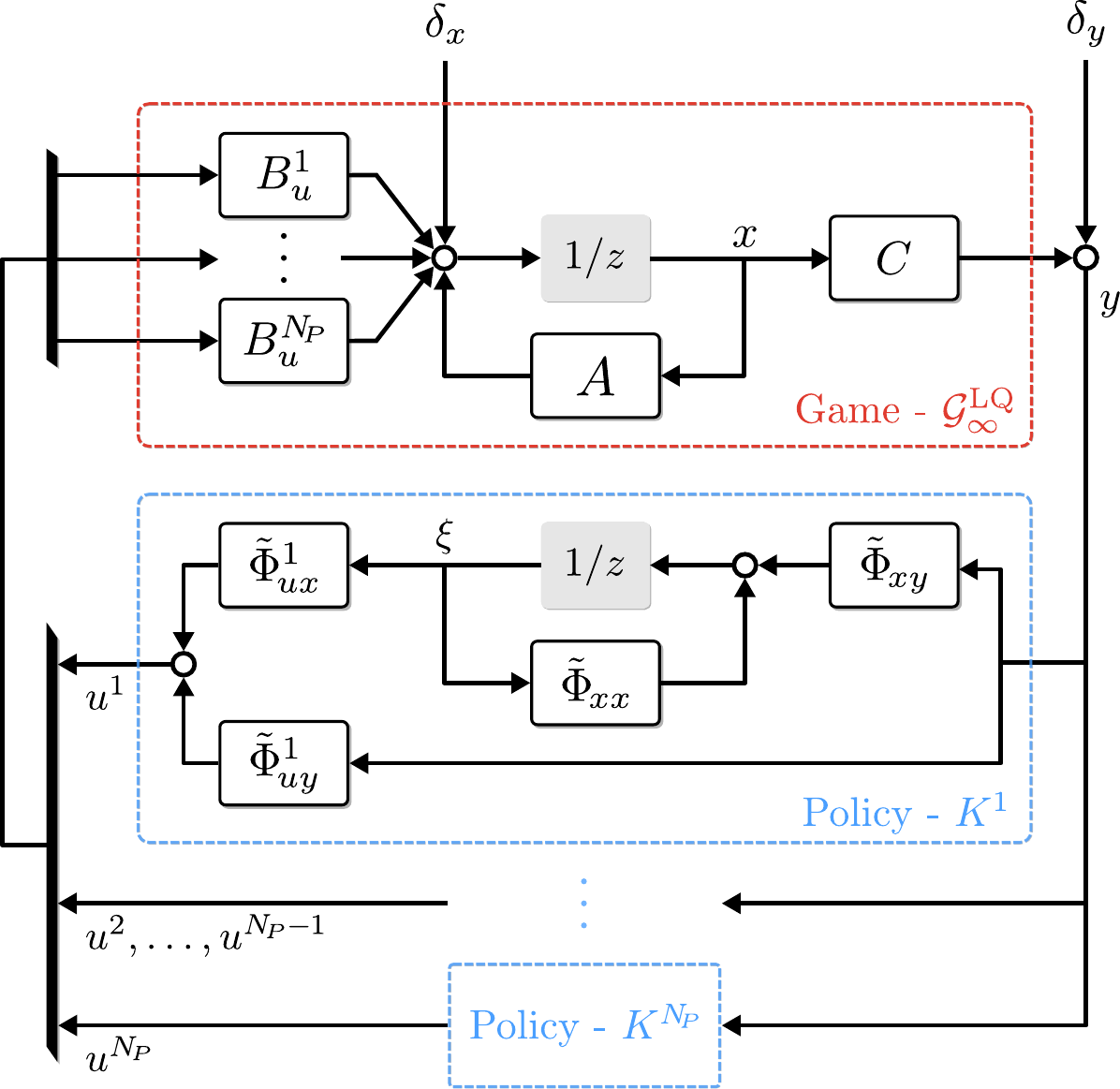}
    
    \caption{The feedback structure for the policies $\hat{K}^p = \hat{\Phi}_{uy}^p - \hat{\Phi}_{ux}^p \hat{\Phi}_{xx}^{-1} \hat{\Phi}_{xy} = {\tilde \Phi_{uy}}^p + {\tilde \Phi_{ux}}^p(zI - {\tilde \Phi_{xx}})^{-1}{\tilde \Phi_{xy}}$, for all $p \in \mathcal{P}$, according to Eq. \eqref{eq: SLP_ControlImplementation}.}
    \label{fig: SLP_ControlDiagram}
\end{figure}

Favouring of a time-domain exposition, we will refer to the system level responses mostly through their spectral factors. 
In this case, Theorem \ref{thm: SystemLevelSynthesis}(a) implies that a stabilizing policy must have its system level responses satisfying the system
\begin{subequations}
    \begin{empheq}[left=\Sigma_{\Phi}:\empheqlbrace]{align} 
        \Phi_{xx,n+1} &= A \Phi_{xx,n} + \textstyle\sum_{p\in\mathcal{P}} B_u^p \Phi_{ux,n}^p; \label{eq: DynamicGame_SLS_Primal_a} \\
        \Phi_{xy,n+1} &= A \Phi_{xy,n} + \textstyle\sum_{p\in\mathcal{P}} B_u^p \Phi_{uy,n}^p, \label{eq: DynamicGame_SLS_Primal_b}
    \end{empheq} \label{eq: DynamicGame_SLS_Primal}%
\end{subequations}
and the ``dual'' system
\begin{subequations}
    \begin{empheq}[left=\Sigma_{\Phi}^*:\empheqlbrace]{align} 
        \Phi_{xx,n+1} &= \Phi_{xx,n} A + \Phi_{xy,n} C; \label{eq: DynamicGame_SLS_Dual_a} \\
        \Phi_{ux,n+1}^{p} &= \Phi_{ux,n}^{p} A + \Phi_{uy,n}^{p} C, ~~ (\forall p \in \mathcal{P}), \label{eq: DynamicGame_SLS_Dual_b}
    \end{empheq} \label{eq: DynamicGame_SLS_Dual}%
\end{subequations}
with $\Phi_{xx,1} = I_{N_x}$, and $\Phi_{xx,0} = \Phi_{xy,0} = \Phi_{ux,0}^p = 0$ ($\forall p \in \mathcal{P}$) due to strict causality. 
A time-domain characterization of each policy $\hat{K}^p$ can also be obtained in terms of these components by applying an inverse $\mathbb{Z}$-transform to Eq. \eqref{eq: SLP_ControlImplementation}:
\begin{corollary} \label{cor: ControlImplementation}
    The policy $\hat{K}^p$ from Eq. \eqref{eq: SLP_ControlImplementation} is defined by the kernel $\Phi^p_K = \Phi_{uy}^p {-} \Phi_{ux}^p {*} \Phi_{xx}^{-1} {*} \Phi_{xy}$ and is implemented as
    \begin{subequations}
        \begin{align}
            \xi_{t+1} &= -          \textstyle\sum_{n=0}^{t} \Phi_{xx,n{+}2}   \xi_{t-n} - \textstyle\sum_{n=0}^{t} \Phi_{xy,n{+}1} y_{t-n}; \\
              u^p_{t} &= \phantom{-}\textstyle\sum_{n=0}^{t} \Phi_{ux,n{+}1}^p \xi_{t-n} + \textstyle\sum_{n=0}^{t} \Phi_{uy,n}^p   y_{t-n}.
        \end{align} \label{eq: SLP_ControlImplementation_Time}%
    \end{subequations}
    using an auxiliary \emph{internal state} $\xi = (\xi_n)_{n\in\mathbb{N}}$ with $\xi_0 = 0$.
\end{corollary}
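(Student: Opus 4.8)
The plan is to read Corollary~\ref{cor: ControlImplementation} as the time-domain transcription of the frequency-domain realisation in Theorem~\ref{thm: SystemLevelSynthesis}(b), so the work is essentially careful bookkeeping of $\mathbb{Z}$-transform index shifts. First I would expand each of the transfer matrices of that theorem, $\tilde{\Phi}_{xx} = z(I - z\hat{\Phi}_{xx})$, $\tilde{\Phi}_{xy} = -z\hat{\Phi}_{xy}$, $\tilde{\Phi}_{ux}^p = z\hat{\Phi}_{ux}^p$ and $\tilde{\Phi}_{uy}^p = \hat{\Phi}_{uy}^p$, as power series in $z^{-1}$ using the kernel expansions $\hat{\Phi}_{xx} = \sum_{n\ge 0} z^{-n}\Phi_{xx,n}$, and likewise for $\hat{\Phi}_{xy}$, $\hat{\Phi}_{ux}^p$, $\hat{\Phi}_{uy}^p$. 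The strict-causality conditions $\Phi_{xx,0} = \Phi_{xy,0} = \Phi_{ux,0}^p = 0$ together with the identity $\Phi_{xx,1} = I$ — obtained by matching the $z^0$-coefficient in the first affine constraint of Eq.~\eqref{eq: SLP_AffineSpace} — are exactly what make the $z^1$ and $z^2$ terms in $\tilde{\Phi}_{xx}$ cancel, leaving the proper expansions $\tilde{\Phi}_{xx} = -\sum_{n\ge 0} z^{-n}\Phi_{xx,n+2}$, $\tilde{\Phi}_{xy} = -\sum_{n\ge 0} z^{-n}\Phi_{xy,n+1}$, $\tilde{\Phi}_{ux}^p = \sum_{n\ge 0} z^{-n}\Phi_{ux,n+1}^p$, $\tilde{\Phi}_{uy}^p = \sum_{n\ge 0} z^{-n}\Phi_{uy,n}^p$. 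Reading off coefficients identifies the kernels of the $\tilde{\Phi}_\bullet$ directly in terms of the closed-loop maps.

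Next I would take the inverse $\mathbb{Z}$-transform of the two realisation relations $z\hat{\xi} = \tilde{\Phi}_{xx}\hat{\xi} + \tilde{\Phi}_{xy}\hat{y}$ and $\hat{u}^p = \tilde{\Phi}_{ux}^p\hat{\xi} + \tilde{\Phi}_{uy}^p\hat{y}$ from Eq.~\eqref{eq: SLP_ControlImplementation}. Products of transfer matrices become convolutions of kernels, and since $\xi$ and $y$ are supported on $\mathbb{N}$ and all kernels are causal, each convolution reduces to the finite sum $\sum_{n=0}^{t}(\cdot)$. Since $\xi_0 = 0$, the shift acts cleanly and the transform of $(\xi_{t+1})_{t}$ is just $z\hat{\xi}$, so the first relation yields $\xi_{t+1} = -\sum_{n=0}^{t}\Phi_{xx,n+2}\xi_{t-n} - \sum_{n=0}^{t}\Phi_{xy,n+1}y_{t-n}$ and the second yields $u^p_t = \sum_{n=0}^{t}\Phi_{ux,n+1}^p\xi_{t-n} + \sum_{n=0}^{t}\Phi_{uy,n}^p y_{t-n}$, which is precisely Eq.~\eqref{eq: SLP_ControlImplementation_Time}. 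That this recursion indeed realises the policy (rather than some other map) is inherited from Theorem~\ref{thm: SystemLevelSynthesis}(b), since Eq.~\eqref{eq: SLP_ControlImplementation_Time} is nothing but the inverse transform of Eq.~\eqref{eq: SLP_ControlImplementation}.

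For the kernel identity $\Phi_K^p = \Phi_{uy}^p - \Phi_{ux}^p * \Phi_{xx}^{-1} * \Phi_{xy}$, I would take the inverse $\mathbb{Z}$-transform of $\hat{K}^p = \hat{\Phi}_{uy}^p - \hat{\Phi}_{ux}^p\hat{\Phi}_{xx}^{-1}\hat{\Phi}_{xy}$ from Theorem~\ref{thm: SystemLevelSynthesis}(b); products again map to convolutions, and $\Phi_{xx}^{-1}$ is by definition the kernel of $\hat{\Phi}_{xx}^{-1}$, which is well defined because $z\hat{\Phi}_{xx}$ is biproper with invertible leading coefficient $\Phi_{xx,1} = I$, so $\hat{\Phi}_{xx}^{-1}$ is a legitimate (improper) element realised by the usual recursion on $(\Phi_{xx,n})_{n\ge 1}$. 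I do not expect a genuine obstacle here: the whole argument is a routine translation, and the one place where an error can slip in is the index arithmetic around $\tilde{\Phi}_{xx}$, where the cancellation of the $z$-degree terms and the invertibility of $\Phi_{xx}^{-1}$ both hinge on the single fact $\Phi_{xx,1} = I$.
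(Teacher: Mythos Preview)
Your proposal is correct and is precisely the computation the paper leaves implicit: the corollary is stated without proof as the time-domain transcription of Eq.~\eqref{eq: SLP_ControlImplementation}, and your expansion of the $\tilde{\Phi}_\bullet$ in $z^{-1}$, cancellation via $\Phi_{xx,0}=0$ and $\Phi_{xx,1}=I$, and inverse $\mathbb{Z}$-transform of the realisation relations is exactly how one fills in the omitted details.
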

The system level parametrization enables a methodology for policy synthesis consisting of searching the space of stabilizing policies directly through $(\Phi_{xx}, \Phi_{xy}, \Phi_{ux}^p, \Phi_{uy}^p)$, for all $p \in \mathcal{P}$. 
In this direction, consider that the players plan their actions by choosing a desired system level response $\Phi_u^p = (\Phi_{ux}^p, \Phi_{uy}^p)$ to noise, then implementing their policy as in Corollary~\ref{cor: ControlImplementation}. 
The original best-response mappings $\{ BR_K^p \}_{p\in\mathcal{P}}$ are then equivalent to \textit{system-level best-response mappings} $\{ BR_{\Phi}^p \}_{p\in\mathcal{P}}$ corresponding to the solution set of problems of the form
\begin{subequations}
\begin{align}
    \minimize_{\Phi_{ux}^p, \Phi_{uy}^p} \quad
         & \sum_{n=0}^{\infty} \left\| \begin{bmatrix} W^p_x & W^p_u \end{bmatrix}\hspace*{-0.25em}\begin{bmatrix} \Phi_{xx,n} & \Phi_{xy,n} \\ \Phi_{ux,n}^p & \Phi_{uy,n}^p \end{bmatrix} \begin{bmatrix} B_w \\ D_w \end{bmatrix} \right\|_F^2  \label{eq: Best_Response_GFNE_SLS_a}\\
    \subjectto_{n = 0, 1, \ldots} \quad
        & \Sigma_{\Phi} \text{ (Eq.~\ref{eq: DynamicGame_SLS_Primal})}, ~~\Sigma_{\Phi}^* \text{ (Eq.~\ref{eq: DynamicGame_SLS_Dual})} , \label{eq: Best_Response_GFNE_SLS_b}\\[-1.5ex]
        & \begin{bmatrix} \Phi_{xx,n} & \Phi_{xy,n} \\ \Phi_{ux,n}^p & \Phi_{uy,n}^p \end{bmatrix} \in \mathcal{S}_{\Phi,n}^p, \label{eq: Best_Response_GFNE_SLS_c} \\
        & \begin{bmatrix} G_x & G_u \\ & \widetilde{G}_{u^p} \end{bmatrix}\hspace*{-0.25em}\begin{bmatrix} \Phi_{xx} & \Phi_{xy} \\ \Phi_{ux} & \Phi_{uy} \end{bmatrix}\begin{bmatrix} B_w \\ D_w \end{bmatrix} * w_n \preceq 1, \label{eq: Best_Response_GFNE_SLS_d}
\end{align} \label{eq: Best_Response_GFNE_SLS}%
\end{subequations}%
where we consider $\widetilde{G}_{u^p} = e_p^{\tran} \otimes G_{u^p}$ to simplify its statement. 
The objective (Eq. \ref{eq: Best_Response_GFNE_SLS_a}) and operational constraints (Eq. \ref{eq: Best_Response_GFNE_SLS_d}) are obtained from the definition of the system level responses. 
The original structural constraints (Eq. \ref{eq: Best_Response_GFNE_c}) are translated into 
(i) the system level parametrization (Eq. \ref{eq: Best_Response_GFNE_SLS_b}) enforcing internal stability, and 
(ii) the system level constraints (Eq. \ref{eq: Best_Response_GFNE_SLS_c}) enforcing $\mathcal{G}_{\infty}^{\text{\text{LQ}}}$-related restrictions via the policy's system-level parameters.
Note that player's policies are linked through the $(\Phi_{xx}, \Phi_{xy})$ responses. 
We refer to \cite{Anderson2019} for more details on how Problem \eqref{eq: Best_Response_GFNE_SLS} can be derived from Problem \eqref{eq: Best_Response_GFNE}.  

The mappings $\{ BR_{\Phi}^p \}_{p\in\mathcal{P}}$ are still intractable, as Problem \eqref{eq: Best_Response_GFNE_SLS} is also infinite-dimensional and stochastic. 
In this case, however, it can be made tractable by a specific choice of the structural constraints (Eq.~\ref{eq: Best_Response_GFNE_SLS_c}) and a robust formulation of the operational constraints (Eq.~\ref{eq: Best_Response_GFNE_SLS_d}); as shown in the following. 

\vskip0.5em
\subsubsection*{Structural constraints} 

The problems in $\{ BR_{\Phi}^p \}_{p\in\mathcal{P}}$ can be made finite-dimensional by restricting the choice of $K^p$ to policies with finite-impulse response (FIR) kernels. 
Using the sets $\mathcal{S}_{\Phi}^p = (\mathcal{S}_{\Phi,n}^p)_{n\in\mathbb{N}}$, this can be done by enforcing %
\begin{equation} \label{eq: FIR_Constraints_Sparsity_A}
    \begin{bmatrix} 
        \Phi_{xx,n}   & \Phi_{xy,n}   \\ 
        \Phi_{ux,n}^p & \Phi_{uy,n}^p 
    \end{bmatrix} 
    = 0, \quad \text{for } n \geq N.
    \\
\end{equation}
In practice, this corresponds to enforcing the terminal conditions $\Phi_{xx,N} = \Phi_{xy,N} = \Phi_{ux,N}^p = \Phi_{uy,N}^p = 0$, then restricting Problem \eqref{eq: Best_Response_GFNE_SLS} to the finite sequences of spectral factors $(\Phi_{xx,n}, \Phi_{xy,n}, \Phi_{ux,n}^p)_{n=1}^{N-1}$ and $(\Phi_{uy,n}^p)_{n=0}^{N-1}$. 
Additionally, FIR kernels simplify the implementation of $K^p$ by fixing the total number of summand terms in Corollary \ref{cor: ControlImplementation}.

\begin{remark}
    The terminal constraints induced by Eq.~\eqref{eq: FIR_Constraints_Sparsity_A} cannot be satisfied when the system is not controllable and observable. 
    While they can be relaxed, the stability and optimality properties of the resulting controller are well-understood only for the state-feedback case \cite{Anderson2019, Dean2020}. 
\end{remark}

The remaining constraint sets $(\mathcal{S}_{\Phi,n}^p)_{n=0}^N$ are designed to impose structure onto $K^p$ through their system-level parametrization (Corollary \ref{cor: ControlImplementation}). 
We consider actuation and communication delays encoded by the sparsity constraints
\begin{equation} \label{eq: Structural_Constraints_SLP}
        \mathrm{Sp}
    \begin{bmatrix} 
        \Phi_{xx,n}   & \Phi_{xy,n}   \\ 
        \Phi_{ux,n}^p & \Phi_{uy,n}^p 
    \end{bmatrix} 
    =
    \mathrm{Sp}
    \begin{bmatrix} 
                     A^{\alpha_n} &              A^{\alpha_n}    C^{\tran} \\ 
        B_u^{p^\tran}A^{\alpha_n} & B_u^{p^\tran}A^{\alpha_{n+1}}C^{\tran} 
    \end{bmatrix},
\end{equation}
for $n = 0,\ldots,N$, with $\alpha_n = \max(0, \lfloor (n-d_a)/d_c \rfloor)$ and $\mathrm{Sp}[\cdot]$ denoting the support of a matrix. 
Under this setup, the policies $K^p$ ($p\in\mathcal{P}$) satisfy that the $i$-th state component $[x]_i = ([x_t]_i)_{t\in\mathbb{N}}$ is only affected by the $i$-th action component $[B_u u^p]_i = ([B_u u^p_t]_i)_{t\in\mathbb{N}}$ after an \textit{actuation delay} of $d_a \geq 0$ steps. 
Additionally, it specifies that measurements propagate in the communication network (which has the same topology as the physical system) with a \textit{communication delay} of $d_c > 0$. 
These parameters are often determined by the hardware of the underlying control system and communication infrastructure. 
We refer to policies satisfying this information pattern as \textit{$(d_a,d_c)$-delayed feedback policies}.
This choice is not restrictive and the results in this paper generalize to a broader class of informational constraints (such as those discussed in \cite{Wang2017}). 
Finally, we note that the ability to impose sparsity to feedback policies, a central feature of the SLS framework, enables the solution of GFNE problems with asymmetric information patterns; a major challenge in the field \cite{Nayyar2012, Basar2014}.

\vskip0.5em
\subsubsection*{Operational constraints}

The mapping $BR_{\Phi}^p$ is stochastic due to the original operational constraints (Eq.~\ref{eq: Best_Response_GFNE_d}) becoming random inequalities in the system-level problem (Eq.~\ref{eq: Best_Response_GFNE_SLS_d}). 
The problem can be made deterministic by instead enforcing
\begin{equation} \label{eq: SLS_Chance_Constraint}
    \mathbf{prob}\left( \begin{bmatrix} G_x & G_u \\ & \widetilde{G}_{u^p} \end{bmatrix}_{i,:} \begin{bmatrix} \Phi_{xx} & \Phi_{xy} \\ \Phi_{ux} & \Phi_{uy} \end{bmatrix}\begin{bmatrix} B_w \\ D_w \end{bmatrix} * w_n \leq 1 \right) \geq \rho,
\end{equation}
for every $i$-th row of the matrix $G^p = \mathrm{col}([G_x ~ G_u], [0 ~ \widetilde{G}_{u^p}])$, given a probability $\rho \in (0.5, 1)$. 
Since $w$ is a white-noise process and $\Phi_w = (\Phi_{xx}B_w {+} \Phi_{xy}D_w,~ \Phi_{ux}B_w {+} \Phi_{uy}D_w)$ is FIR, Eq. \eqref{eq: SLS_Chance_Constraint} can be expressed as 
$
    \mathbf{prob}\big( G_{\Phi,i}^{p} \tilde{w} \leq 1 \big) \geq \rho
$
given the random vector $\tilde{w}$ with $\mathrm{E}\tilde{w} = 0$ and $\mathrm{E}(\tilde{w}\tilde{w}^{\tran}) = I_{NN_w}$, and the block-matrix $G_{\Phi,i}^p = \big[ [G^p]_{i,:}\Phi_{w,0}~\cdots~[G^p]_{i,:}\Phi_{w,N} \big]$. 
The constraints Eq. \eqref{eq: Best_Response_GFNE_SLS_d} can thus be realized using the cumulative distribution function of each $G_{\Phi,i}^{p} \tilde{w}$ ($i = 1, \ldots, N_{G^p}$). 

A common assumption is to consider the random vector $\tilde{w}$ to be Gaussian, i.e., $\tilde{w} \sim \mathrm{Normal}(0,I)$. 
Using standard results from optimization theory \cite{Boyd2004}, and expanding $G_{\Phi,i}^p$, Eq.~\eqref{eq: SLS_Chance_Constraint} is then equivalent to the second-order conic (SOC) constraint
\begin{equation} \label{eq: SLS_Chance_Constraint_SOC}
    \left\| \left( \begin{bmatrix} G_x & G_u \\ & \widetilde{G}_{u^p} \end{bmatrix}_{i,:}\hspace*{-0.25em}\begin{bmatrix} \Phi_{xx} & \Phi_{xy} \\ \Phi_{ux} & \Phi_{uy} \end{bmatrix}\hspace*{-0.25em} \begin{bmatrix} B_w \\ D_w \end{bmatrix} \right)^{\tran}  \right\|_{\ell_2} \leq \frac{1}{Q(\rho)}
\end{equation}%
with $Q : [0,1] \to \mathbb{R}$ being the quantile function of the standard Normal distribution (which is only positive for $\rho > 0.5$).
We remark that these constraints cannot ensure that the synthesized policy satisfies the original $U^p$ for all realizations of the noise $w$; even without the Gaussian assumption. 
However, at the risk of conservativeness, the best-response mapping $BR_{\Phi}^p$ can still be designed to ensure with arbitrarily high probability $\rho > 0.5$ that such constraints will be satisfied during operation. 

\subsection{vGFNE-seeking via SLS and FBS} \label{sec: vGFNE_Seeking_SLS}

In this section, to simplify notation, we momentarily define $\Phi_{x} = [\Phi_{xx} ~ \Phi_{xy}]$, $\Phi_u^p = [\Phi_{ux}^p ~ \Phi_{uy}^p]$, and $W_w = (B_w, D_w)$.
Due to the equivalence between $BR_{K}$ and $BR_{\Phi}$, it is clear that a stabilizing policy profile is a GFNE (i.e., $K^{\star} \in BR_K(K^{\star})$) if its associated system level response is a fixed-point of $BR_{\Phi}$ (i.e., $\Phi_{u}^{\star} \in BR_{\Phi}(\Phi_{u}^{\star})$). 
The concept of a \emph{variational generalized feedback Nash equilibrium} can be defined via the system level parametrization of the output-feedback policies: 
A policy profile $K^{\star} = (\hat{\Phi}_{uy}^{p^{\star}} - \hat{\Phi}_{ux}^{p^{\star}} (\hat{\Phi}_{xx}^{\star})^{-1} \hat{\Phi}_{xy}^{\star})_{p\in\mathcal{P}}$ is a vGFNE when the response profile $\Phi_u^{\star} = (\Phi_u^{1^{\star}}, \ldots, \Phi_u^{N_P^{\star}})$ is a vGNE of the (static) game defined by the objectives
\begin{equation*}
    J_{\Phi}^p(\Phi_u^{p}, \Phi_u^{-p}) = \sum_{n=0}^{N} \Big\| \begin{bmatrix} W^p_x & W^p_u \end{bmatrix}\hspace*{-0.25em}\begin{bmatrix} \Phi_{x,n} \\ \Phi_{u,n}^p \end{bmatrix} W_w \Big\|_F^2
\end{equation*}
and the finite-dimensional and convex global feasible set%
\begin{align*}
    \mathcal{U}_{\Phi,\mathcal{G}} = \{& \Phi_u \in \ell_2[0,N] \mid \\
        &~ \Sigma_{\Phi} \text{ (Eq.~\ref{eq: DynamicGame_SLS_Primal})}, ~~\Sigma_{\Phi}^* \text{ (Eq.~\ref{eq: DynamicGame_SLS_Dual})}, ~~\Phi_{x} = \Phi_{u} = 0, \\
        &~ \mathrm{Sp}\begin{bmatrix} \Phi_{x,n} \\ \Phi_{u,n} \end{bmatrix} = \mathrm{Sp} \begin{bmatrix} A^{\alpha_n} & A^{\alpha_n}C^{\tran} \\ B_u^{\tran}A^{\alpha_n} & B_u^{\tran}A^{\alpha_{n+1}}C^{\tran} \end{bmatrix} ~ (\forall n), \\
        &~ \Big\| \Big( \begin{bmatrix} G_x & G_u \\ & \widetilde{G}_u \end{bmatrix}_{i,:}\begin{bmatrix} \Phi_{x} \\ \Phi_{u} \end{bmatrix} W_w \Big)^{\tran} \Big\|_{\ell_2} \leq \frac{1}{Q(\rho)}~~ (\forall i) \Big\},
\end{align*}
with $\widetilde{G}_u = \mathrm{blkdiag}(G_{u^1}, \ldots, G_{u^{N_P}})$.
Since $\Phi_{x} = (\Phi_{x,n})_{n=1}^N$ and $\Phi_{u}^p = (\Phi_{u,n}^p)_{n=0}^N$ are FIR, these signals can be represented as (concatenated) matrices.
The vGFNE-seeking problem can thus be formulated as the monotone inclusion problem
\begin{multline} \label{eq: vGFNE_Monotone_Inclusion_Problem}
    \text{Find } \Phi_u^{\star} \in \mathbb{R}^{(N+1)N_u \times (N_x+N_y)} \\
        \text{ such that } 0 \in F_{\Phi}(\Phi_u^{\star}) + N_{\mathcal{U}_{\Phi,\mathcal{G}}}(\Phi_u^{\star}),
\end{multline}
given the pseudo-gradient $F_{\Phi}(\Phi_u) = (\nabla_{\Phi_u^p} J_{\Phi}^p(\Phi_u^p, \Phi_u^{-p}))_{p\in\mathcal{P}}$. 
Note that $F_{\Phi}$ is maximal monotone because its components, $\nabla_{\Phi_u^p} J_{\Phi}^p$ ($\forall p \in \mathcal{P}$), are the gradients of closed convex proper functions \cite{Bauschke2017}.
The Problem \eqref{eq: vGFNE_Monotone_Inclusion_Problem} is a special instance of the monotone inclusion problem presented in Section \ref{sec: Preliminaries}, and thus it can be solved via FBS.
The resulting routine (Algorithm \ref{alg: FB_vGFNE-Seeking_SLS}) will be referred to as the \textit{vGFNE-Seeking via FBS} method\footnotemark.

\footnotetext{The update-index $k \in \mathbb{N}$ should not be mistaken with time-indices $n \in \mathbb{N}$. In particular, $\Phi_{n|k}$ is the $n$-th factor of a kernel $\Phi \in \ell_2(\mathbb{N})$ after $k$ updates.}

\begin{algorithm}[tb!] \label{alg: FB_vGFNE-Seeking_SLS}
    \caption{vGFNE-Seeking via FBS}
    Initialize $\Phi_{u|0} = (\Phi_{u|0}^1, \ldots, \Phi_{u|0}^{N_P})$\;
    \For{$k = 0, 1, 2, \ldots$}{ 
        \For{each player $p \in \mathcal{P}$}{
            $\hat{K}_k^p \coloneqq \hat{\Phi}_{uy|k}^p - \hat{\Phi}_{ux|k}^p \hat{\Phi}_{xx|k}^{-1} \hat{\Phi}_{xy|k}$\;
            $\Phi_{u|k+1/2}^p \coloneqq \Phi_{u|k}^p - \eta \nabla_{\Phi_u^p} J_{\Phi}^p(\Phi_{u|k}^p, \Phi_{u|k}^{-p})$\;
        }
        \For{coordinator}{
            $\Phi_{u|k+1} \coloneqq \proj_{\mathcal{U}_{\Phi,\mathcal{G}}}(\Phi_{u|k+1/2})$\;
        }
    }
\end{algorithm} 

The distinct feature in this vGNE-seeking routine is the policy update (line 4): 
In practice, it corresponds to plugging ($\Phi_{xx|k}, \Phi_{xy|k}, \Phi_{ux|k}^p, \Phi_{uy|k}^p$) on the policy's implementation from Corollary \ref{cor: ControlImplementation}. 
Importantly, Algorithm \ref{alg: FB_vGFNE-Seeking_SLS} does not require knowledge of the state $x$ or input $u^p$ signals. 
A direct consequence is that the players can perform this policy learning routine while simultaneously operating the underlying networked system. 
Specifically, the players can operate according to the policies $K_k = (K_k^{1}, \ldots, K_k^{N_P})$ until the coordinator processes their proposals $\Phi_{u|k+1/2} = (\Phi_{u|k+1/2}^p)_{p\in\mathcal{P}}$ and then returns the responses $\Phi_{u|k+1} = (\Phi_{u|k+1}^p)_{p\in\mathcal{P}}$ which they can use to update their policies. 
This enables vGFNE-seeking in games that cannot be interrupted for players to redesign their policies. 

We finish the section with discussions on the convergence and computational properties of Algorithm \ref{alg: FB_vGFNE-Seeking_SLS}.

\vskip0.5em
\subsubsection*{Convergence properties}

From Lemma \ref{thm: fFB_Convergence}, the algorithm converges to a unique solution if the learning rate satisfies $\eta \in (0, 2M_{F_{\Phi}}/L_{F_{\Phi}}^2)$, with $L_{F_{\Phi}}$ and $M_{F_{\Phi}}$ being the Lipschitz and strong-monotonicity constants of $F_{\Phi}$. 
Given the quadratic objectives $J^p_{\Phi}$ ($p \in \mathcal{P}$) and linear dynamics $\Sigma_{\Phi}$ (Eq.~\ref{eq: DynamicGame_SLS_Primal}), each player's gradient can be shown to have the affine form 
\begin{equation} \label{eq: Pseudogradient_Affine_Players}
    \nabla_{\Phi_u^p}J_{\Phi}^p(\Phi_u) = 2\textstyle\sum_{\tilde{p}\in\mathcal{P}}(H_{\Phi}^{pp^\tran} H_{\Phi}^{p\tilde{p}}) \Phi_u^{\tilde{p}} (W_w W_w^{\tran}) + h^p
\end{equation} given the block-triangular matrices $H_{\Phi}^{p\tilde{p}} = [H_{\Phi,n,n'}^{p\tilde{p}}]_{0 \leq n,n' \leq N}$,
\begin{equation} \label{eq: Pseudogradient_Hessian_Players}
    H_{\Phi,n,n'}^{p\tilde{p}} = \begin{cases}
        W_u^p & \text{if } n = n' \text{ and } p = \tilde{p} \\
        W_x^p A^{(n-1)-n'} B_u^{\tilde{p}} & \text{if } n > n' \\
        0 & \text{otherwise}
    \end{cases}
\end{equation}
and the appropriate constant matrix $h^p \in \mathbb{R}^{NN_u \times (N_x+N_y)}$.
As such, also the pseudo-gradient $F_{\Phi}(\Phi_u)$ must be an affine operator.
We can then establish the following result:
\begin{theorem} \label{thm: FPhi_Lipschitz_Monotone}
    The system-level pseudo-gradient $F_{\Phi}$ is $M_{F_{\Phi}}$-strongly-monotone and $L_{F_{\Phi}}$-Lipschitz with constants
    \begin{subequations}
    \begin{align}
        M_{F_{\Phi}} &= \lambda_{\min}\big( D_{\Phi}^{\tran} H_{\Phi} + H_{\Phi}^{\tran} D_{\Phi} \big) \sigma_{\min}^2(W_w), \\
        L_{F_{\Phi}} &= 2\sigma_{\max}(D_{\Phi}^{\tran}H_{\Phi})\sigma^2_{\max}(W_w).
    \end{align}\label{eq: FPhi_Lipschitz_Monotone}%
    \end{subequations}
    given $D_{\Phi} = \mathrm{blkdiag}(H_{\Phi}^{pp})_{p\in\mathcal{P}}$ and $H_{\Phi} = [H_{\Phi}^{p\tilde{p}}]_{p,\tilde{p}\in\mathcal{P}}$.
\end{theorem}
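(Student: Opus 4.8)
The plan is to exploit that $F_{\Phi}$ is affine. Stacking the players' gradients from Eq.~\eqref{eq: Pseudogradient_Affine_Players}, we may write $F_{\Phi}(\Phi_u) = \mathcal{Q}_{\Phi}(\Phi_u) + q$ for a constant $q$ and the linear map $\mathcal{Q}_{\Phi} : \Delta \mapsto 2\,(D_{\Phi}^{\tran}H_{\Phi})\,\Delta\,(W_w W_w^{\tran})$ acting on the finite-dimensional space of stacked kernels endowed with the Frobenius inner product. For any affine operator on such a space we have $F_{\Phi}(x) - F_{\Phi}(y) = \mathcal{Q}_{\Phi}(x-y)$, so its Lipschitz modulus equals the induced norm $\|\mathcal{Q}_{\Phi}\|$ and its strong-monotonicity modulus equals the smallest eigenvalue of the self-adjoint part $\tfrac{1}{2}(\mathcal{Q}_{\Phi}+\mathcal{Q}_{\Phi}^{*})$. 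The whole proof thus reduces to two spectral estimates on $\mathcal{Q}_{\Phi}$.

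For $L_{F_{\Phi}}$, I would use that the sandwich operator $\Delta \mapsto A\Delta B$ has induced Frobenius norm $\|A\|_2\|B\|_2$, so $\|\mathcal{Q}_{\Phi}\| = 2\,\|D_{\Phi}^{\tran}H_{\Phi}\|_2\,\|W_w W_w^{\tran}\|_2$. Since $W_w W_w^{\tran}$ is symmetric positive semidefinite, $\|W_w W_w^{\tran}\|_2 = \sigma_{\max}^2(W_w)$, while $\|D_{\Phi}^{\tran}H_{\Phi}\|_2 = \sigma_{\max}(D_{\Phi}^{\tran}H_{\Phi})$; this yields the stated $L_{F_{\Phi}}$.

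For $M_{F_{\Phi}}$, the idea is to expand $\langle \mathcal{Q}_{\Phi}(\Delta),\Delta\rangle = 2\,\langle D_{\Phi}\Delta W_w,\, H_{\Phi}\Delta W_w\rangle_{F} = 2\,\mathrm{tr}\!\big(W_w W_w^{\tran}\,\Delta^{\tran}(D_{\Phi}^{\tran}H_{\Phi})\,\Delta\big)$ and lower-bound it. Passing through a symmetric square root of $W_w W_w^{\tran}$, the bound is driven by $\lambda_{\min}(W_w W_w^{\tran}) = \sigma_{\min}^2(W_w)$, which is positive because $W_w$ is full row rank by Assumption~\ref{as: FeedbackNash_Equilbrium_Existence}, and by the \emph{positive definiteness} of $D_{\Phi}^{\tran}H_{\Phi}$. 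The latter is the substantive point: the diagonal block $H_{\Phi}^{pp\,\tran}H_{\Phi}^{pp}$ is positive definite because $H_{\Phi}^{pp}$ is block lower triangular in the time index with $W_u^p$ — full column rank — on the diagonal, while the orthogonality $W_u^{p\,\tran}W_x^p = 0$ from Assumption~\ref{as: FeedbackNash_Equilbrium_Existence} kills the cross terms between the $W_u^p$ part and the $W_x^p A^{\bullet}B_u^p$ part of that block. Combining the two factors and tracking the product of smallest singular values then yields $M_{F_{\Phi}} = 2\,\sigma_{\min}(D_{\Phi}^{\tran}H_{\Phi})\,\sigma_{\min}^2(W_w)$.

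The step I expect to be the main obstacle is exactly this last lower bound. Because $D_{\Phi}^{\tran}H_{\Phi}$ is not symmetric — its off-diagonal blocks $H_{\Phi}^{pp\,\tran}H_{\Phi}^{p\tilde p}$ carry the inter-player coupling routed through $\Phi_x$ — one must show that its skew part does not erode the strictly positive lower bound inherited from the diagonal blocks, i.e.\ that $\langle \mathcal{Q}_{\Phi}(\Delta),\Delta\rangle$ still dominates $2\,\sigma_{\min}(D_{\Phi}^{\tran}H_{\Phi})\,\sigma_{\min}^2(W_w)\,\|\Delta\|_F^2$. This rests on the block-triangular and orthogonality structure imposed by Assumption~\ref{as: FeedbackNash_Equilbrium_Existence}; once it is secured, the remainder is routine manipulation with Kronecker-product identities and singular-value inequalities.
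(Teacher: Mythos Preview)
Your approach is essentially the same as the paper's: both write $F_{\Phi}$ as the affine map $\Phi_u \mapsto 2(D_{\Phi}^{\tran}H_{\Phi})\,\Phi_u\,(W_wW_w^{\tran}) + h$ and then read off the Lipschitz and strong-monotonicity moduli via the Kronecker singular-value identity $\sigma(A\otimes B)=\{\sigma_i(A)\sigma_j(B)\}$ --- the paper vectorizes explicitly while you work directly with the Frobenius inner product and the sandwich operator, which are equivalent formulations. You are in fact more explicit than the paper about the delicate point, correctly flagging that the strong-monotonicity bound hinges on the symmetric part of $D_{\Phi}^{\tran}H_{\Phi}$ rather than on $\sigma_{\min}(D_{\Phi}^{\tran}H_{\Phi})$ alone; the paper records the step as $\lambda_{\min}\big((W_wW_w^{\tran})\otimes D_{\Phi}^{\tran}H_{\Phi} + (\cdot)^{\tran}\big)\le 2\sigma_{\min}\big((W_wW_w^{\tran})\otimes D_{\Phi}^{\tran}H_{\Phi}\big)$ and then applies the Kronecker factorization, without further justification of that inequality.
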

\begin{proof}
    See the Appendix.
\end{proof}
Interestingly, this implies that the pseudo-gradient $F_{\Phi}$ is ill-conditioned if so is the noise-filtering matrix $W_w = (B_w, D_w)$ and that convergence rates are not affected by the noise if $W_w = I$.
Moreover, it suggests that open-loop unstable games (when $\rho(A) \geq 1$) might require a careful tuning of $W_x^p$ ($\forall p$) to avoid the terms $W_x^p A^n B_u^{\tilde{p}}$ ($n = 1,\ldots,N{-}1$) in Eq.~\eqref{eq: Pseudogradient_Hessian_Players} from exploding, then affecting the condition number of $F_{\Phi}$.

Applying Lemma \ref{thm: fFB_Convergence} and Theorem \ref{thm: FPhi_Lipschitz_Monotone}, the following holds:
\begin{corollary} \label{thm: vGFNE_FB_Convergence}
    If $\eta \in (0, 2M_{F_\Phi}/L_{F_\Phi}^2)$, with ($M_{F_\Phi}$,$L_{F_\Phi}$) as in Eq.~\eqref{eq: FPhi_Lipschitz_Monotone}, then Algorithm \ref{alg: FB_vGFNE-Seeking_SLS} converges linearly to the unique vGFNE $\hat{K}^{\star} = (\hat{\Phi}_{uy}^{p^{\star}} - \hat{\Phi}_{ux}^{p^{\star}} \hat{\Phi}_{xx}^{\star^{-1}} \hat{\Phi}_{xy}^{\star})_{p\in\mathcal{P}}$, parametrized by the vGNE $\Phi_u^{\star} \in \mathbf{VI}(BR_{\Phi})$, with rate
    \begin{equation} 
        \lim_{k \to \infty} \frac{\| \Phi_{u|k+1} - \Phi_u^{\star} \|_{F}}{\| \Phi_{u|k} - \Phi_u^{\star} \|_{F}} = \sqrt{1 - \eta (2M_{F_\Phi} - \eta L_{F_\Phi}^2)}
    \end{equation}
    from any initial $\Phi_{u|0} \in \mathbb{R}^{(N+1)N_u \times (N_x+N_y)}$. 
\end{corollary}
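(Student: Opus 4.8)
The plan is to recognise Algorithm~\ref{alg: FB_vGFNE-Seeking_SLS} as the instantiation of the generic routine of Algorithm~\ref{alg: FB_vGNE-Seeking} on the monotone inclusion problem~\eqref{eq: vGFNE_Monotone_Inclusion_Problem}, and then to chain Theorems~\ref{thm: FPhi_Lipschitz_Monotone} and~\ref{thm: fFB_Convergence}. The only substantive work is to check that the system-level static game built on the objectives $J_\Phi^p$ and the feasible set $\mathcal{U}_{\Phi,\mathcal{G}}$ meets the standing assumptions of Section~\ref{sec: Preliminaries}; after that the statement is immediate.

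First I would verify those hypotheses. Since the responses are FIR, $\Phi_u$ lives in the finite-dimensional space $\mathbb{R}^{(N+1)N_u\times(N_x+N_y)}$, and $\mathcal{U}_{\Phi,\mathcal{G}}$ is the intersection of the affine subspaces $\Sigma_\Phi$ and $\Sigma_\Phi^*$, the linear sparsity constraints, the second-order cone constraints~\eqref{eq: SLS_Chance_Constraint_SOC}, and the initial/terminal equalities: it is therefore closed and convex, and nonempty under Assumption~\ref{as: Game_Is_Stabilisable} together with the feasibility premise in Assumption~\ref{as: FeedbackNash_Equilbrium_Existence} (the $\Phi_N=0$ truncation being admissible precisely because $(A,B_u,C)$ is controllable and observable). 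Each $J_\Phi^p$ is a convex quadratic in $\Phi_u^p$, hence continuously differentiable, and strictly convex in $\Phi_u^p$ because $W_u^p$ is full column rank (Assumption~\ref{as: FeedbackNash_Equilbrium_Existence}a), so the analogue of Assumption~\ref{as: Nash_Equilbrium_Existence} holds for this game. By Theorem~\ref{thm: FPhi_Lipschitz_Monotone}, $F_\Phi$ is $M_{F_\Phi}$-strongly monotone and $L_{F_\Phi}$-Lipschitz; being single-valued, affine (cf.~\eqref{eq: Pseudogradient_Affine_Players}) and strongly monotone, it is in particular maximally monotone, so Assumption~\ref{as: F_MaximalMonotonicity} holds and $\mathbf{VI}(BR_\Phi)=\mathbf{zer}(F_\Phi+N_{\mathcal{U}_{\Phi,\mathcal{G}}})$ is the singleton $\{\Phi_u^\star\}$.

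With these facts, Lines~5 and~7 of Algorithm~\ref{alg: FB_vGFNE-Seeking_SLS} are exactly the forward step $\Phi_{u|+}=(I-\eta F_\Phi)(\Phi_{u|k})$ and the backward step $\Phi_{u|k+1}=\mathrm{proj}_{\mathcal{U}_{\Phi,\mathcal{G}}}(\Phi_{u|+})=(I+\eta N_{\mathcal{U}_{\Phi,\mathcal{G}}})^{-1}(\Phi_{u|+})$ of~\eqref{eq: FB_Equations}. Applying Theorem~\ref{thm: fFB_Convergence} with $F=F_\Phi$, $\mathcal{S}_{\mathcal{G}}=\mathcal{U}_{\Phi,\mathcal{G}}$, and any $\eta\in(0,2M_{F_\Phi}/L_{F_\Phi}^2)$ yields $\Phi_{u|k}\to\Phi_u^\star$ linearly, from any $\Phi_{u|0}$, with exactly the rate $\sqrt{1-\eta(2M_{F_\Phi}-\eta L_{F_\Phi}^2)}$. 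To pass from responses to policies, I would invoke the equivalence between $BR_K$ and $BR_\Phi$ established before~\eqref{eq: vGFNE_Monotone_Inclusion_Problem} together with Theorem~\ref{thm: SystemLevelSynthesis}(b): the policy $\hat K^{p^\star}=\hat\Phi_{uy}^{p^\star}-\hat\Phi_{ux}^{p^\star}(\hat\Phi_{xx}^\star)^{-1}\hat\Phi_{xy}^\star$ reconstructed from $\Phi_u^\star$ (with $\Phi_x^\star$ recovered affinely from $\Sigma_\Phi$) is the unique vGFNE, and since this reconstruction depends continuously on the finitely many spectral factors of $\Phi_u$ over $\mathcal{U}_{\Phi,\mathcal{G}}$, the policy iterates of Line~4 converge to $\hat K^\star$; the quantitative rate, being phrased directly in terms of $\|\Phi_{u|k+1}-\Phi_u^\star\|_{\ell_2}$, is inherited verbatim.

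The main obstacle is not the convergence argument, which is bookkeeping on top of Theorems~\ref{thm: fFB_Convergence} and~\ref{thm: FPhi_Lipschitz_Monotone}, but the clean verification of the hypotheses: establishing $\mathcal{U}_{\Phi,\mathcal{G}}\neq\emptyset$ after the FIR truncation (this is exactly where controllability/observability, rather than mere stabilisability/detectability, is needed, as flagged in the Remark) and ensuring the inverse $(\hat\Phi_{xx|k})^{-1}$ in Line~4 is well-defined along the whole trajectory — which holds because $z\hat\Phi_{xx|k}$ has leading coefficient $I_{N_x}$ by~\eqref{eq: DynamicGame_SLS_Primal_c} and is therefore invertible over $\mathcal{RH}_\infty$. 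One should also record that strong monotonicity of the affine $F_\Phi$ makes the composed forward--backward operator a contraction with exactly that modulus, which is what makes the error ratio converge to the stated constant rather than merely be bounded by it.
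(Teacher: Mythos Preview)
Your proposal is correct and follows exactly the approach the paper takes: the corollary is stated immediately after Theorem~\ref{thm: FPhi_Lipschitz_Monotone} with only the sentence ``From Theorems~\ref{thm: fFB_Convergence} and~\ref{thm: FPhi_Lipschitz_Monotone}, the following holds,'' and no further argument. Your write-up is in fact more careful than the paper's, explicitly verifying the standing assumptions (closedness/convexity/nonemptiness of $\mathcal{U}_{\Phi,\mathcal{G}}$, maximal monotonicity of $F_\Phi$, well-definedness of $(\hat\Phi_{xx|k})^{-1}$) that the paper leaves implicit.
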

In summary, whenever the (system-level) pseudo-gradient $F_{\Phi}$ has a condition number $\kappa_{F_{\Phi}} = L_{F_{\Phi}}/M_{F_{\Phi}} \in (0,\infty)$, then the vGFNE-Seeking via FBS method converges linearly to the unique solution to the monotone inclusion Problem \eqref{eq: vGFNE_Monotone_Inclusion_Problem}. 

\vskip0.5em
\subsubsection*{Computation} 

Numerically, the pseudo-gradient operator $F_{\Phi}$ can be evaluated through automatic differentiation and the projection operator $\proj_{\mathcal{U}_{\Phi,\mathcal{G}}}$ by directly solving the associated optimization problem. 
However, the structure of this vGFNE problem allows for more efficient numerics. 
In the case of $F_{\Phi} = (\nabla_{\Phi_{u}^p} J^p_{\Phi})_{p\in\mathcal{P}}$, each component can be computed as 
\begin{multline}
    \nabla_{\Phi_{u}^p} J^p(\cdot) 
    = \Big( 2({W_u^p}^{\tran}W_u^p)\Phi_{u,n|k}^p(W_w W_w^{\tran}) \\ + 2({B_u^p})^{\tran} \Delta^p_{x,n|k}(W_w W_w^{\tran}) \Big)_{n=0}^N,
\end{multline}
with the sensitivities $\Delta^p_{x|k} = ({\Delta^p_{x,n|k}})_{n=0}^N$ obtained by first forward-propagating the $\Phi_{x|k}$ responses,
\begin{align*} 
    \Phi_{x,n+1|k} &= A \Phi_{x,n|k} + \textstyle\sum_{p\in\mathcal{P}} B_u^p \Phi_{u,n|k}^p,
\end{align*}
from $\Phi_{x,0|k} = [I_{N_x} ~~ 0]$, 
then by backward-propagating
\begin{align*}
    \Delta_{x,n-1|k}^p &= A^{\tran} \Delta_{x,n|k}^p + ({W_x^p}^{\tran} W_x^p) \Phi_{x,n|k}
\end{align*}
from $\Delta_{x,N|k} = 0$. 
These operations are not demanding and thus players are not required large computational resources to participate in this vGFNE-seeking routine. 
Moreover, the gradient $\nabla_{\Phi_{u}^p} J^p_{\Phi}(\cdot)$ depends on other players' $\Phi_{u|k}^{-p}$ only through $\Phi_{x|k}$; 
when this response is already available (e.g., provided by the coordinator), then each $p$-th player is capable of computing its proposed update $\Phi^p_{u|k+1/2}$ using only private information. 

In general, the projection map
\begin{equation} \label{eq: vGFNE_Projection_Operator}
    \proj_{\mathcal{U}_{\Phi,\mathcal{G}}}(\cdot) = \argmin_{\Phi_u} \big\{ \| \Phi_u - \Phi_{u|k+1/2} \|_{\ell_2} \mid \Phi_u \in \mathcal{U}_{\Phi,\mathcal{G}} \big\}
\end{equation}
is a sparse convex optimization that can be solved efficiently. 
However, the problem has $(N{+}1)N_u(N_x{+}N_y)$ decision variables and $NN_x(N_x{+}N_y)$ equality constraints, and thus scales poorly with the state-space dimensions $(N_x,N_u,N_y)$ and FIR horizon $N$. 
Its computational burden can be alleviated by exploiting the structure of Problem \eqref{eq: vGFNE_Projection_Operator} and using a distributed approach for its solution. 
Consider the problem
\begin{equation} \label{eq: vGFNE_Projection_Operator_Separable}%
    \minimize_{\Psi, \Lambda} ~ J^{(r)}_{\Phi}(\Psi) + J^{(c)}_{\Phi}(\Lambda)  ~\subjectto~ \Psi = \Lambda
\end{equation}%
defined by the extended-real-value functions
\begin{align*}
    J^{(r)}_{\Phi}(\Psi) &= \begin{cases}
        \frac{1}{2} \| \Psi - \Phi_{u,k+1/2} \|_{\ell_2}^2 & \text{if Eq.~(\ref{eq: DynamicGame_SLS_Dual}, \ref{eq: FIR_Constraints_Sparsity_A}) holds} \\
        \infty & \text{otherwise} \\
    \end{cases}
    \\
    J^{(c)}_{\Phi}(\Lambda) &= \begin{cases}
        \frac{1}{2} \| \Lambda - \Phi_{u,k+1/2} \|_{\ell_2}^2 & \text{if Eq.~(\ref{eq: DynamicGame_SLS_Primal}, \ref{eq: FIR_Constraints_Sparsity_A}, \ref{eq: SLS_Chance_Constraint_SOC}) holds} \\
        \infty & \text{otherwise} \\
    \end{cases}
\end{align*}
The Problem \eqref{eq: vGFNE_Projection_Operator_Separable} can be solved by the Alternating Direction Method of Multipliers (ADMM, \cite{Ryu2022}), and either $\Psi^{\star}$ or $\Lambda^{\star}$ used as the projection $\Phi_{u|k+1}$. 
The subproblem associated with $J^{(r)}_{\Phi}$ considers only the dual dynamic constraints $\Sigma_{\Phi}^*$ (Eq.~\ref{eq: DynamicGame_SLS_Dual}) and the structural constraints from Eq. \eqref{eq: FIR_Constraints_Sparsity_A}. 
The subproblem associated with $J^{(c)}_{\Phi}$ considers the primal dynamic constraints $\Sigma_{\Phi}$ (Eq.~\ref{eq: DynamicGame_SLS_Primal}), the structural constraints from Eq. \eqref{eq: FIR_Constraints_Sparsity_A}, and the operational constraints from Eq. \eqref{eq: SLS_Chance_Constraint_SOC}. 
For many cases (e.g., diagonal weighting matrices and bound constraints), these subproblems are, respectively, row-wise and column-wise separable and can be reduced into smaller problems to be solved in parallel. 
This can lead to a substantial performance improvement, making Algorithm \ref{alg: FB_vGFNE-Seeking_SLS} scalable to large-scale problems. 
We refer to \cite{Wang2018} for more details on partially separable SLS problems and their solution via ADMM.

\section{Illustrative application: Stabilization of a partially observed power grid} \label{sec: Examples}

In the following, we study the efficacy and performance of the proposed vGFNE-Seeking via FBS (Algorithm \ref{alg: FB_vGFNE-Seeking_SLS}) in a simulated application inspired by the decentralized control of power networks. 
The example is adapted from \cite{Wang2018}.

We consider a $3 \times 3$ grid network of interconnected subsystems (Figure \ref{fig: PowerNetwork}). 
Each $p$-th node represents a power system operated by a self-interested agent aiming to stabilize its \textit{phase angle deviation} $\theta^p$ and \textit{frequency deviation} $\dot{\theta}^p$ against disturbances. 
For this purpose, each agent has actuators capable of applying a load directly to $\dot{\theta}^p$. 
However, for budgetary reasons, each subsystem is equipped only with a phase measurement unit producing noisy readings of ${\theta}$.
The measurements are publicly available through some communication network (which is assumed to have the same topology as in Figure \ref{fig: PowerNetwork}).

Each $p$-th subsystem has the continuous-time dynamics
\begin{equation} \label{eq: SwingDynamics}
    m^p \ddot{\theta}^p + d^p \dot{\theta}^p = -\textstyle\sum_{\tilde{p}\in\mathcal{P}} \kappa^{p,\tilde{p}}(\theta^p - \theta^{\tilde{p}}) + u^p + \delta_{x}^p,
\end{equation} 
where $u^p$ and $\delta_x^p$ represent the controllable load and the external disturbances, respectively. 
The fixed parameters are sampled from $m^p \sim \mathrm{Uniform}(0.5,1)$,  $d^p \sim \mathrm{Uniform}(1,1.5)$, and  $k^{p,\tilde{p}} = k^{\tilde{p},p} \sim \mathrm{Uniform}(0.5,1)$ with $k^{p,\tilde{p}} = 0$ if the link $j \to i$ is not present in the topology shown in Figure \ref{fig: PowerNetwork}. 
The phase measurements of each $p$-th subsystem are represented by the signal $y^p = \theta^p + \delta_y^p$, with $\delta_y^p$ being the measurement noise.
The disturbances $\delta_x^p$ and the measurement noise $\delta_y^p$ are modelled as additive white Gaussian noise (AWGN) with standard deviations of $1$ and $0.1$, respectively. 

\begin{figure}[htb!] \centering
    \includegraphics[width=0.975\columnwidth]{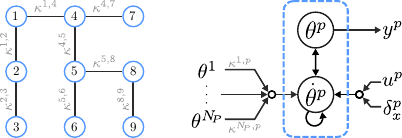}

    \caption{Power grid: Schematic of the interconnected network (left) and the interactions within each $p$-th subsystem (right).}
    \label{fig: PowerNetwork}
\end{figure}

The agents seek to stabilize their associated subsystems while subject to operational and structural limitations: 
The bus between any two subsystems $p,\tilde{p} \in \mathcal{P}$ is required to satisfy
\begin{equation} \label{eq: CapacityRestriction}
    -5 \leq \kappa^{p,\tilde{p}}\big(\theta^p(\tau) - \theta^{\tilde{p}}(\tau)\big) \leq 5, \quad \forall \tau \in \mathbb{R},
\end{equation}
to avoid overloading the corresponding channels.
Moreover, the actuation and communication infrastructure is such that both the deployment of controls and the transmission of information occur with a time delay of $0.1$ [units-of-time].

\subsection{Game formulation and the vGFNE-seeking problem}

This problem is formulated as a stochastic dynamic game 
\begin{equation*}
    \mathcal{G}_{\infty}^{\text{\text{LQ}}} \coloneqq (\mathcal{P}, \{ U^p \}_{p\in\mathcal{P}}, \{ J^p \}_{p\in\mathcal{P}}),
\end{equation*}
with players $\mathcal{P} = \{1, \ldots, 9\}$ and discrete-time dynamics
\begin{empheq}[left=\Sigma_{\mathcal{G}}:\empheqlbrace]{align*} 
    x_{t+1} &= A x_{t} + B_w w_t + \textstyle\sum_{p\in\mathcal{P}} B_u^p u^p_{t}, \quad x_0 = 0;\\
      y_{t} &= C x_{t} + D_w w_t.
\end{empheq}
defined by the block matrix $A = [A_{ij}]_{1\leq i,j, \leq N_P}$,
$$
    A_{ij} = \begin{cases}
        \begin{bmatrix} 1 & 0.1 \\ -\frac{0.1\sum_{j}k^{ij}}{m^i} & 1 - \frac{0.1d^i}{m^i} \end{bmatrix} & \text{ if } i = j  \\[1ex]
        \begin{bmatrix} 0 &  0  \\      \frac{0.1k^{ij}}{m^i}     &        0               \end{bmatrix} & \text{ otherwise}
    \end{cases}
$$
and $B_u^p = e_{p} \otimes [0 ~~ \frac{0.1}{m^p}]^{\tran}$ ($\forall p \in \mathcal{P}$), $C = I_{N_P} \otimes [1 ~~ 0]$, and $$(B_w,D_w) = \mathrm{blkdiag}\Big(I_{N_P} \otimes \begin{bmatrix} 0.01 & \\ & \frac{0.1}{m^p} \end{bmatrix}, 0.1I_{N_y}\Big).$$ 
This model is obtained by defining $x = (\theta^p, \dot{\theta}^p)_{p\in\mathcal{P}}$, then performing an Euler discretization to the state-space obtained from Eq.~\eqref{eq: SwingDynamics} using an interval of $\Delta \tau = 0.1$ units-of-time. 
The global measurement signal is defined as $y = (y^p)_{p\in\mathcal{P}}$. 
To ensure Assumption \ref{as: FeedbackNash_Equilbrium_Existence}(b), we also include a small artificial disturbance to the phase angle deviation $\theta^p$.
In our experiment, $\rho(A) = 1$ and thus the system is not stable. 
It is, however, both controllable and observable. 
The task of stabilizing each individual subsystem is encoded through the functional
$$
    J^p(u^p, u^{-p}) \coloneqq 
    \mathrm{E}\left[\sum_{t=0}^{\infty} \left\| \begin{bmatrix} W^p_x & W^p_u \end{bmatrix}\hspace*{-0.25em}\begin{bmatrix} x_t \\ u^p_t \end{bmatrix} \right\|_2^2 \right],
$$ 
with $[W_x^p ~~ W_u^p] = \mathrm{blkdiag}(e_p e_p^{\tran} \otimes 0.01I_{N_x}, ~I_{N_u^p})$. 
Finally, each $p$-th player's policy must be $(d_a,d_c)$-delayed with $d_a = d_c = 1$ and it must satisfy the operational constraints
\begin{equation} \label{eq: CapacityRestriction_U}
    U^{p}(u^{-p}) = \{ u^p \in \ell_{\infty}^{N_u}(\mathbb{N}) \mid G_x x_t \preceq 1, ~ t \in \mathbb{N} \},
\end{equation}
with matrix $G_x$ encoding the capacity limits from Eq.~\eqref{eq: CapacityRestriction}.

We simulate an execution of the game $\mathcal{G}_{\infty}^{\text{LQ}}$ in which the players are seeking an equilibrium policy by adhering to the vGFNE-seeking routine in Algorithm \ref{alg: FB_vGFNE-Seeking_SLS}. 
In this scenario, each player's policy is represented by a system level parametrization defined by FIR responses with $N = 16$ spectral components. 
The responses are constrained to have the sparsity patterns defined in Eq.~\eqref{eq: Structural_Constraints_SLP} and to satisfy a robust realization of the operational constraints with probability $\rho = 0.975$ (Eq.~\ref{eq: SLS_Chance_Constraint_SOC}). 
The policies are updated every $\Delta k = 32$ time steps (that is, $k = \lfloor t / \Delta k \rfloor$) with rate $\eta = \frac{M_{F_{\Phi}}}{L_{F_{\Phi}}^2} = \frac{0.0002}{0.0004} = 0.5$, computed and provided by the central coordinator to ensure convergence. 
We set $\Phi_{u|0} = \proj_{\mathcal{U}_{\Phi},\mathcal{G}}(0)$ as the initial profile.

\subsection{Simulation results and discussion}

Due to numerical limitations, we interrupt the updates at the $k_f$ first satisfying $\| \Phi_{u|k_f+1} - \Phi_{u|k_f} \|_{\ell_2} / \| \Phi_{u|k_f} \|_{\ell_2} \leq 10^{-15}$, when the policy updates become numerically negligible, and assume that $\Phi_{u|k_f} \approx \Phi_u^{\star} \in \mathbf{VI}(BR_{\Phi})$.
The convergence of the vGFNE-seeking algorithm to this fixed-point is shown in Figure \ref{fig: PowerNetwork_Convergence}.
The predicted convergence based on the rate $L_{(I-\eta F_{\Phi})} = \sqrt{1 - \eta(2 M_F - \eta L_F^2)} \approx 0.99995$ is also displayed.
The results show that, when adhering to Algorithm \ref{alg: FB_vGFNE-Seeking_SLS}, players converge to a vGFNE solution to $\mathcal{G}_{\infty}^{\text{LQ}}$ within $k_f = 10^4$ policy updates (or $t_f = 10^4 \times \Delta k$ time steps).
Interestingly, despite slow, the actual convergence of our vGFNE-seeking routine is still faster than that predicted by Corollary \ref{thm: vGFNE_FB_Convergence} by a factor of almost $100$.
We remark that this does not disqualify the usefulness of this corollary, as its primary purpose is to certificate that $\mathcal{G}_{\infty}^{\text{LQ}}$ has an unique solution to which Algorithm \ref{alg: FB_vGFNE-Seeking_SLS} converges.
In principle, once convergence is ensured, a potential acceleration can be achieved by incorporating inertia into Algorithm \ref{alg: FB_vGFNE-Seeking_SLS} \cite{Iutzeler2017}.
In this experiment, the seemingly slow convergence of the vGFNE-seeking routine is still not a practical issue:
If the power grid is operated at a timescale of seconds, it would take $(t_f \times \Delta \tau) = 32000$ seconds, or $8$ hours, for players to achieve an equilibrium strategy.
This is a relatively short period considering that such infrastructure is expected to be operational for years.
Finally, we again emphasize that Algorithm \ref{alg: FB_vGFNE-Seeking_SLS} allows the policy updates to occur alongside (and independently of) the actual operation of the system;
the players can continuously improve their policies without interrupting or destabilizing the grid. 

In Figure \ref{fig: PowerNetwork_Simulation}, we show the first hour of operation of the power grid while the policies are being updated (that is, for the first $t \leq 3600$ time steps or $k \leq \lfloor 3600/\Delta k \rfloor = 112$ policy updates).
We compare the closed-loop evolution with that obtained by an open-loop operation with $u^p_t = 0$ for all players $p \in \mathcal{P}$.
The results show that the policies $\hat{K}_k^p \coloneqq \hat{\Phi}_{uy|k}^p - \hat{\Phi}_{ux|k}^p \hat{\Phi}_{xx|k}^{-1} \hat{\Phi}_{xy|k}$ ($\forall p\in\mathcal{P}$) stabilize the network against the external disturbances, as observed through the noisy measurements of $\{ \theta^p \}_{p\in\mathcal{P}}$.
Moreover, the closed-loop system is not destabilized by the implementation of a new control policy every $\Delta k = 32$ time steps.
The policies are also shown to enforce the robust realization of the operational constraints: 
The buses $\kappa^{p,\tilde{p}}(\theta^p_t - \theta^{\tilde{p}}_t)$, between every $p,\tilde{p} \in \mathcal{P}$, are simultaneously within the safety limits for approximately 97\% of this simulation period.
Under open-loop operation, the safety limits are violated for the majority of the simulation period.
This performance is obtained at the expense of relatively large frequency loads $\{ u^p \}_{p\in\mathcal{P}}$.
Given the tuning $\{ W_x^p, W_u^p \}_{p\in\mathcal{P}}$, this is by design the optimal actions that the players must deploy while seeking an equilibrium policy. 

\begin{figure}[tb!] \centering
    \includegraphics[width=\columnwidth]{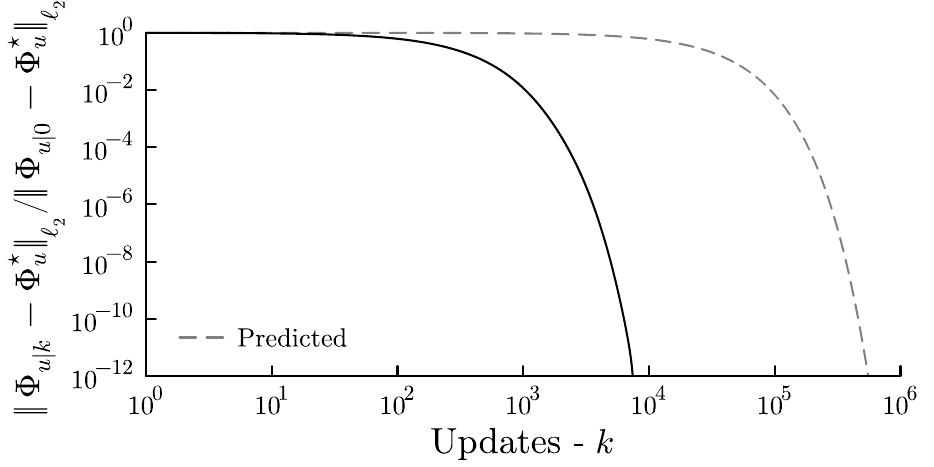}

    \caption{Power grid: Convergence of the vGFNE-seeking routine.}
    \label{fig: PowerNetwork_Convergence}
\end{figure}

\begin{figure}[tb!]
    \includegraphics[width=\columnwidth]{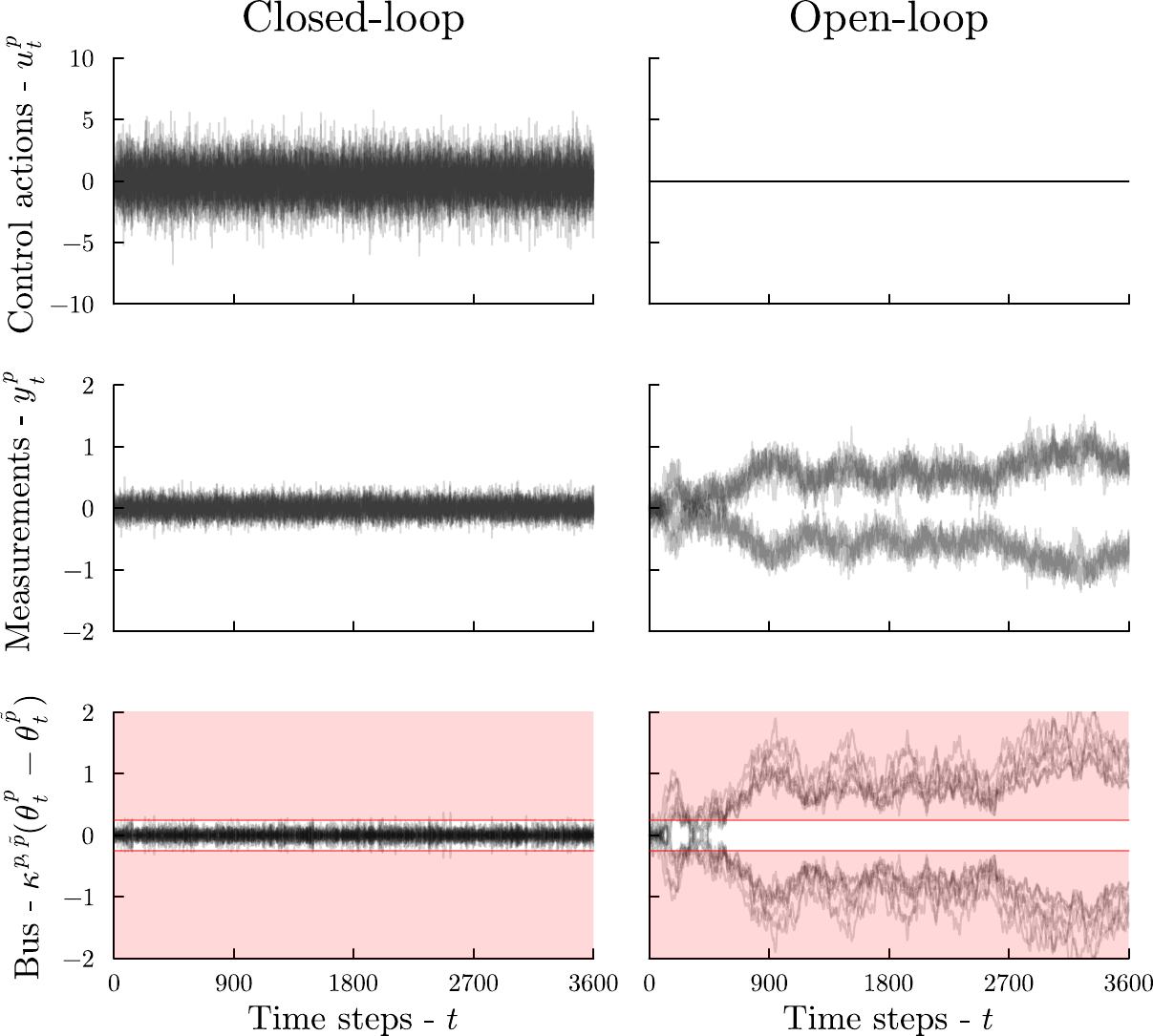}

    \caption{Power grid: Closed-loop (left panels) and open-loop (right panels) responses in terms of frequency loads $u^p$, phase angle measurements $y^p_t$, and pairwise buses $\kappa^{p,\tilde{p}}(\theta^p_t - \theta^{\tilde{p}}_t)$, for all $p,\tilde{p} \in \mathcal{P}$. The signals from each subsystem are plotted superimposed. The shaded region indicates the unsafe operation region of the network channels.}
    \label{fig: PowerNetwork_Simulation}
\end{figure}

Finally, the output-feedback policies $K_k^p$ ($\forall p\in\mathcal{P}$) are also shown to implement a communication structure based on the topology in Figure \ref{fig: PowerNetwork}.
To demonstrate this property, we simulate the closed-loop responses from the vGFNE policy, $K_{k_f} = (K^p_{k_f})_{p\in\mathcal{P}}$, when the system is initially at rest (i.e., $x_0 = 0$) and is then subjected to an impulse affecting the phase angle deviation of the central node (i.e., $B_w w_t = \delta_t e_9$).
The results (Figure \ref{fig: PowerNetwork_InfoStructure}) show that the flow of information follows the network topology depicted in Figure \ref{fig: PowerNetwork}:
At $t=1$, the impulse is perceived at the $5$-th subsystem and the observation $y_1^5$ is propagated to its neighbors $p \in \{ 4, 6, 8 \}$.
The players $p \in \{5, 4, 6, 8\}$ react to this disturbance after an action delay of $d_a = 1$ time steps.
At $t=3$, its effect is perceived at the subsystems $p \in \{ 4, 6, 8 \}$ and measurements $( y_1^{p}, y_2^{p}, y_3^p )_{p\in\{4,6,8\}}$ are transmitted to the two-hop neighbors $p \in \{1, 7, 9\}$.
This information pattern proceeds until all the players $p \in \mathcal{P}$ have been affected and are engaged in attenuating the disturbance.
The magnitude of the applied control actions is shown to be higher the closer the subsystems are to the central node $p=5$.
Additionally, the policies are observed to implement \textit{deadbeat} control: 
The state is stirred to zero instantaneously after $N = 16$ control actions.
This behavior (known as \textit{time-localization} in the SLS literature \cite{Anderson2019}) explains the aforementioned large control actions, as deadbeat controllers are notoriously aggressive for small $N$ \cite{Astrom1997}.
Interestingly, this property also implies that disturbances can be attenuated before affecting subsystems that are more than $N$-hops distant from where they enter the network.

\begin{figure}[t!]
    \includegraphics[width=\columnwidth]{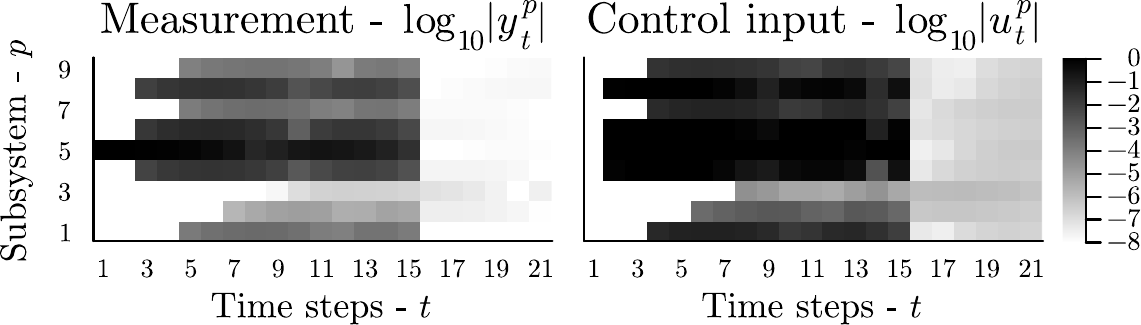}

    \caption{Power grid: Closed-loop impulse response in terms of measurements $y^p$ and control inputs $u^p$, for each individual subsystem $p \in \mathcal{P}$.}
    \label{fig: PowerNetwork_InfoStructure}
\end{figure}

\section{Concluding remarks} \label{sec: Concluding_Remarks}

This work presented a GFNE-seeking algorithm for noncooperative games with dynamics that are linear, stochastic, potentially unstable, and partially observed.
We first considered the equivalent representation of each player's policies as their corresponding closed-loop responses to the noise (given others' policies), then we designed a vGFNE-seeking routine based on well-established operator splitting algorithms.  
Under this system level parametrization, the GFNE-seeking algorithm is applicable to problems in which the policies are required to be stabilizing and to satisfy operational (on the state and input signals) and structural (on the policy itself) constraints.
Using results from operator theory, we derived conditions for the existence and uniqueness of a vGFNE solution and established convergence certificates for our algorithm.
After its main aspects are presented, the algorithm is demonstrated on the task of stabilizing a partially observed and decentralized power grid.
The results demonstrate its efficacy: When adhering to this routine, players approach a GFNE solution while still complying with operational and communication constraints.

\appendix[Proof of Theorem \ref{thm: FPhi_Lipschitz_Monotone}]

From Eq.~\eqref{eq: Pseudogradient_Affine_Players}, the pseudo-gradient has the compact form
$$
    F_{\Phi}(\Phi_u) = 2(D_{\Phi}^{\tran}H_{\Phi}) \Phi_u (W_w W_w^{\tran}) + h 
$$
given matrices $D_{\Phi} = \mathrm{blkdiag}(H_{\Phi}^{pp})_{p\in\mathcal{P}}$, $H_{\Phi} = [H_{\Phi}^{p\tilde{p}}]_{p,\tilde{p}\in\mathcal{P}}$, and $h = (h^{p})_{p\in\mathcal{P}}$.
In vectorized form, this operator becomes $\mathrm{vec}(F_{\Phi}(\Phi_u)) = 2\big((W_w W_w^{\tran})\otimes(D_{\Phi}^{\tran} H_{\Phi})\big)\mathrm{vec}(\Phi_u) + \mathrm{vec}(g)$. 
Since the strong-monotonicity constant of an affine operator $F(x) = Ax+b$ is $(1/2)\lambda_{\min}(A+A^{\tran})$ \cite{Ryu2022}, we have that
\begin{align*}
    M_{F_{\Phi}} 
        &= \lambda_{\min}\big( (W_w W_w^{\tran}) \otimes D_{\Phi}^{\tran} H_{\Phi} +  ((W_w W_w^{\tran}) \otimes D_{\Phi}^{\tran} H_{\Phi})^{\tran} \big) \\ 
        &= \lambda_{\min}\big( (W_w W_w^{\tran}) \otimes (D_{\Phi}^{\tran} H_{\Phi} + H_{\Phi}^{\tran} D_{\Phi} ) \big) \\
        &= \lambda_{\min}\big( D_{\Phi}^{\tran} H_{\Phi} + H_{\Phi}^{\tran} D_{\Phi} \big) \sigma_{\min}^2(W_w)
\end{align*}
where we used the distributivity of the Kronecker product, the fact that $\lambda_{\min}(A \otimes B) = \lambda_{\min}(B)\lambda_{\min}(A)$, and the property $\lambda_{\min}(A A^{\tran}) = \sigma_{\min}^2(A)$, for any matrices $A$ and $B$ \cite{Horn1991}.
The tightest Lipschitz constant of $F_{\Phi}$ is obtained from the spectral norm $\|2(W_w W_w^{\tran}) \otimes (D_{\Phi}^{\tran} H_{\Phi})\|_2$ \cite{Ryu2022}, or, equivalently,
\begin{align*}
    L_{F_{\Phi}}
        &= 2\sigma_{\max}\big( (W_w W_w^{\tran}) \otimes (D_{\Phi}^{\tran} H_{\Phi}) \big) \\
        &= 2\sigma_{\max}\big( D_{\Phi}^{\tran} H_{\Phi} \big)\sigma_{\max}^2\big( W_w \big),
\end{align*}%
using the fact that $\lambda_{\max}(A \otimes B) = \lambda_{\max}(B)\lambda_{\max}(A)$ and that $\lambda_{\max}(A A^{\tran}) = \sigma_{\max}^2(A)$ for any matrices $A$ and $B$.  

\bibliographystyle{ieeetr}

\end{document}